\documentclass[11pt]{article}

\usepackage{latexsym}
\usepackage{amsmath}
\usepackage{amsfonts}
\usepackage{amssymb}
\usepackage{amsthm}
\usepackage{psfrag}
\usepackage{epsfig}
\usepackage{color}

% if you want to have links in xdvi please use these options 
%\usepackage[hypertex]{hyperref}
% if you want to see bookmarks in acroread, please use this option
%\usepackage[notcite,notref]{showkeys}

\oddsidemargin 0.5cm
\textwidth     16cm
\textheight    20cm

\usepackage[tight]{subfigure}
\subfigcapskip -2cm
%\subfigtopskip -5cm
\subfigbottomskip -2cm
\subfigcapmargin -2cm

\newcommand{\R}{\mbox{\rm I\kern-.18em R}}
\newcommand{\fR}{\mbox{\footnotesize\rm I\kern-.18em R}}
\newcommand{\sR}{\mbox{\small\rm I\kern-.18em R}}
\newcommand{\N}{\mbox{\rm I\kern-.18em N}}
\newcommand{\dist}{\mathop{\rm dist}\nolimits}

\newcommand{\<}{\langle}
\renewcommand{\>}{\rangle}

\newcommand{\bcurl}{\mathop{\rm{\bf curl}}\nolimits}

\newcommand{\curlS}[1]{\mathop{\rm curl_{\rm{#1}}}\nolimits}

\newcommand{\bcurlS}[1]{\mathop{\rm{\bf curl}_{\rm{#1}}}\nolimits}

\newcommand{\G}{\Gamma}
\newcommand{\curlG}{\curlS{\Gamma}}
\newcommand{\bcurlG}{\bcurlS{\Gamma}}

\newcommand{\CE}{{\cal E}}

\newcommand{\CQ}{{\cal Q}}

\newcommand{\bn}{{\bf n}}
\newcommand{\bt}{{\bf t}}
\newcommand{\bvarphi}{{\mbox{\boldmath $\varphi$}}}

\newcommand{\be}{\begin{equation}}
\newcommand{\ee}{\end{equation}}

\newtheorem{theorem}{Theorem}[section]

\newtheorem{remark}[theorem]{Remark}
\newtheorem{lemma}[theorem]{Lemma}
\newtheorem{corollary}[theorem]{Corollary}
\newtheorem{proposition}[theorem]{Proposition}

\title{
Discontinuous Galerkin $hp$-BEM with quasi-uniform meshes
\thanks{Supported by FONDECYT-Chile through project 1110324 and
        by Ministery of Education of Spain through project MTM2010-18427}
}

\author{
Norbert Heuer
\thanks{
Facultad de Matem\'aticas, Pontificia Universidad Cat\'olica de Chile,
Avenida Vicu\~na Mackenna 4860, Macul, Santiago, Chile,
email: {\tt nheuer@mat.puc.cl}}
\and
Salim Meddahi
\thanks{Departamento de Matem\'aticas, Facultad de Ciencias,
Universidad de Oviedo, Calvo Sotelo s/n, Oviedo, Spain,
e-mail: {\tt salim@uniovi.es}}
}

\begin{document}
%\date{\today}
\date{}
\maketitle

\bigskip
\begin{abstract}
We present and analyze a discontinuous variant of the $hp$-version of the
boundary element Galerkin method with quasi-uniform meshes. The model problem
is that of the hypersingular integral operator on an (open or closed) polyhedral
surface. We prove a quasi-optimal error estimate and conclude convergence
orders which are quasi-optimal for the $h$-version with arbitrary degree and
almost quasi-optimal for the $p$-version. Numerical results underline the theory.

\bigskip
\noindent
{\em Key words}: $hp$-version with quasi-uniform meshes,
                 boundary element method,
                 discontinuous Galerkin method,
                 hypersingular operators

\noindent
{\em AMS Subject Classification}:
65N38,          %Boundary element methods
65N55,          %Multigrid methods; domain decomposition
65N30,          %Finite elements, Rayleigh-Ritz and Galerkin methods, finite methods
65N12,          %Stability and convergence of numerical methods
65N15.          %Error bounds
\end{abstract}

%%%%%%%%%%%%%%%%%%%%%%%%%%%%%%%%%%%%%%%%%%%%%%%%%%%%%%%%%%%%%%%%%%%%%%%%%%%%%%%%
\section{Introduction}

Discontinuous approximations of solutions to boundary value problems of
second order are well established and have their advantages in comparison to
conforming methods, e.g., by providing more flexibility in the design of
discrete spaces. This applies not only to finite elements but also to the
boundary element Galerkin method (BEM), that is,
finite element approximations of solutions to boundary integral operators.
Whereas there is a huge amount of literature about discontinuous finite
elements, relatively little is known on non-conforming approximations of
boundary integral operators. In this paper we analyze a discontinuous
$hp$-approximation of the hypersingular operator governing the Laplacian.
In principle, our analysis applies to other hypersingular operators as well,
e.g. from linear elasticity and acoustics (Helmholtz equation), in the sense
that the main tools from fractional order Sobolev spaces remain identical.
But some specific estimates are indeed non-trivial for these applications.
Note also that boundary element approximations of solutions to problems governed
by weakly singular operators or by equations of the second kind can use discontinuous
approximations, they are conforming. Hypersingular operators are the only
boundary integral operators from second order elliptic problems that are
challenging when one tries to relax the continuity of basis functions.

We started our study of non-conforming approximations of hypersingular operators
with the analysis of Lagrangian multipliers \cite{GaticaHH_09_BLM} (for the
implementation of essential boundary conditions) and Crouzeix-Raviart elements
\cite{HeuerS_09_CRB}. In \cite{HealeyH_10_MBE} we extended the Lagrangian
multiplier technique to a domain decomposition method, and in
\cite{ChoulyH_NDD} we presented a domain decomposition variant based on
the Nitsche coupling that avoids the additional space needed for a Lagrangian
multiplier. All these results are only about lowest order approximations. Indeed,
their analysis makes heavy use of finite-dimension arguments (like norm
equivalences and inverse properties) and Sobolev norm estimates on fixed domains
(sub-surfaces), and do not extend to high order methods.

In this paper we study the $hp$-version with quasi-uniform meshes. Key point
of our analysis is to avoid finite-dimension techniques from previous papers
and to work with general Sobolev norm estimates (not restricted to finite-dimensional
spaces) so that on elements there is no restriction for polynomial degrees.
In this way, we are able to prove quasi-optimal convergence of a
discontinuous boundary element Galerkin method (Theorem~\ref{thm_cea})
that holds on conforming quasi-uniform meshes for general non-uniform polynomial
degree distributions.
The small price to pay is that this Cea estimate involves three different
Sobolev norms ($L^2$, $H^{1/2}$ and $H^s$ with $s>1/2$). Nevertheless, it
leads to a quasi-optimal estimate in the case of the $h$-version (Corollary~\ref{cor_h})
and an almost quasi-optimal estimate for the $p$-version (Corollary~\ref{cor_p}).
In the latter case, almost quasi-optimal means that there is a $\log^{3/2}(p)$-perturbation
of the quasi-optimal estimate for the conforming method.
Though, such logarithmic perturbations are
known from the previous results (except for Crouzeix-Raviart elements) and it is
remarkable that our $h$-estimate is free from this.

Principal techniques used in the analysis are of the Strang type (discrete ellipticity
and consistency of the discrete formulation). In this sense, the approach follows
the same lines as our analysis of a Nitsche domain decomposition \cite{ChoulyH_NDD},
nevertheless relying on local instead of global estimates. For ease of presentation,
we assume that meshes are conforming. However, techniques from domain decomposition
in \cite{ChoulyH_NDD} can be used without difficulty to extend our analysis to
meshes which are conforming only on sub-domains (sub-surfaces). In this case, an additional
logarithmic perturbation appears in the error estimate.

The remainder of this paper is structured as follows. In the next section we
define some Sobolev norms, present our model problem (on a closed polyhedral
surface), specify the standard boundary element (Galerkin) method,
and recall an integration-by-parts formula for the hypersingular operator.
In Section~\ref{sec_DG} we present our discontinuous
boundary element method, state the main results (quasi-optimal convergence,
Theorem~\ref{thm_cea}) and conclude convergence orders for the lowest-order $h$-version
(Corollary~\ref{cor_h}), the $p$-version (Corollary~\ref{cor_p}), and the $h$-version
with arbitrary polynomial degree (Corollary~\ref{cor_hp}).
For presentation of the last two corollaries
we need to recall results on the regularity of the solution to our model problem
(in terms of appearing singularities) which is done in the same section.
Technical details and the proof of the main theorem are given in Section~\ref{sec_proofs}.
In Subsection~\ref{sec_open} we discuss the changes which are necessary to analyze
the model problem on an open polyhedral surface. In fact, there is some improvement
for a single smooth surface piece.
In Section~\ref{sec_num} we present several numerical results that confirm our
error estimates.

Throughout the paper, $a\lesssim b$ means that $a\le cb$ with a generic constant $c>0$
that is independent of involved parameters like $h$ or $p$. Similarly, the notation
$a\gtrsim b$ and $a\simeq b$ is used.

%%%%%%%%%%%%%%%%%%%%%%%%%%%%%%%%%%%%%%%%%%%%%%%%%%%%%%%%%%%%%%%%%%%%%%%%%%%%%%%%
\section{Sobolev spaces} \label{sec_sobolev}
\setcounter{equation}{0}
\setcounter{figure}{0}
\setcounter{table}{0}

We consider standard Sobolev spaces where the following norms are used:
For $\Omega\subset\R^n$ and $0<s<1$ we define
\[
   \|u\|^2_{H^s(\Omega)}:=\|u\|^2_{L^2(\Omega)} + |u|^2_{H^s(\Omega)}
\]
with semi-norm
\[
    |u|_{H^s(\Omega)} := 
    \Bigl(
    \int_\Omega \int_\Omega \frac{|u(x)-u(y)|^2}{|x-y|^{2s+n}} \,dx\,dy
    \Bigr)^{1/2}.
\]
For a Lipschitz domain $\Omega$ and $0<s<1$, the space
$\tilde H^s(\Omega)$ is defined as the completion of $C_0^\infty(\Omega)$
under the norm
\[
   \|u\|_{\tilde H^s(\Omega)}
   :=
   \Bigl(
   |u|^2_{H^{s}(\Omega)}
   +
   \int_\Omega \frac{u(x)^2}{(\dist(x,\partial\Omega))^{2s} } \,dx
   \Bigr)^{1/2}.
\]
For $s\in (0,1/2)$, $\|\cdot\|_{\tilde H^s(\Omega)}$ and $\|\cdot\|_{H^s(\Omega)}$
are equivalent norms whereas for $s\in(1/2,1)$ there holds
$\tilde H^s(\Omega) = H_0^s(\Omega)$, the latter space being the completion
of $C_0^\infty(\Omega)$ with norm in $H^s(\Omega)$.
For $s>0$ the spaces $H^{-s}(\Omega)$ and $\tilde H^{-s}(\Omega)$ are the
dual spaces (with $L^2(\Omega)$ as pivot space)
of $\tilde H^s(\Omega)$ and $H^s(\Omega)$, respectively.
For norms of vector valued functions we use the same notation as for scalar functions.

Let $\G$ be a piecewise plane, open or closed, polyhedral surface with faces
$\G_j$, $j=1,\ldots,L$. Throughout the paper we will identify faces with polygonal
subsets of $\R^2$. First we will analyze the case of a closed surface, and in
Section~\ref{sec_open} we will mention particular changes which are necessary
to analyze the case of an open surface.

Our model problem is:
{\em For a given function $f\in L^2(\G)$ find $u\in H^{1/2}(\G)$ such that
\(
   \<u,1\>_\G=0
\)
and}
\be \label{IE}
   Wu(x):=-\frac 1{4\pi}\frac {\partial}{\partial \bn(x)}
              \int_\G u(y) \frac {\partial}{\partial \bn(y)} \frac 1{|x-y|}
              \,dS(y)
   = f(x),\quad x\in\G.
\ee
Here, $\bn$ is the exterior normal unit vector on $\G$, and $\<\cdot,\cdot\>_\G$
denotes the $L^2(\G)$-inner product and its extension by duality.
%pairing between $H^{-1/2}(\G)$ and $\tilde H^{1/2}(\G)$. 
Throughout, this generic notation will be used for other domains/surfaces as well,
indicated by the index.

A variational formulation of (\ref{IE}) is:
{\em Find $u\in H^{1/2}(\G)$ such that
\(
   \<u,1\>_\G=0
\)
and}
\be \label{weak_org}
   \<Wu, v\>_\G
   = \<f, v\>_\G\quad\forall v\in H^{1/2}(\G).
\ee

A standard (i.e. conforming) boundary element method for the approximate solution of
(\ref{weak_org}) is to select a piecewise polynomial subspace
$H_{hp,\mathrm{conf}}\subset H^{1/2}(\G)$ and
\[
   H_{hp,\mathrm{conf}}^0 := \{v\in H_{hp,\mathrm{conf}};\; \<v,1\>_\G=0\},
\]
and to define an approximant $u_{hp,\mathrm{conf}}\in H_{hp,\mathrm{conf}}^0$ by
\be \label{bem}
\<W u_{hp,\mathrm{conf}}, v\>_\G
   = \<f, v\>_\G\quad\forall v \in H_{hp,\mathrm{conf}}^0.
\ee
The bilinear form with hypersingular operator $W$ is usually calculated by
making use of its relation to a bilinear form with weakly singular operator $V$
defined by
\[
   V\bvarphi(x) := \frac 1{4\pi}\int_\G \frac {\bvarphi(y)}{|x-y|}\,dS(y),
   \quad \bvarphi\in (H^{-1/2}(\G))^3,\ x\in\G.
\]
There holds the relation between the operators $W$ and $V$:
\be \label{WV}
   W = \curlG ( V \bcurlG )
\ee
as a linear continuous mapping from
$H^{1/2}(\G)$ to $H^{-1/2}(\G)$ (see \cite{Nedelec_82_IEN}), so that
\[
 \<W u , v\>_\G = \<\bcurlG u ,V \bcurlG v \>_\G
  \quad \forall u,v \in H^{1/2}(\G),
\]
cf. also \cite[Lemma 2.3]{GaticaHH_09_BLM}.
Here, $\bcurlG$ is the surface curl operator and $\curlG$ its adjoint operator.
In the following, $\bcurlS{Q}$ will denote the restriction of $\bcurlG$ onto
$Q\subset\G$ and we will use $\bcurlS{\hat Q}$ as the surface curl (rotated gradient)
on a reference element $\hat Q$ in local coordinates.

Let us recall from \cite{GaticaHH_09_BLM} that integration by parts
for $\bcurlG$ on any face $\G_j$ gives rise to a linear bounded operator
\be \label{T_global}
      \left\{\begin{array}{cll}
      \{v\in H^{1/2}(\G);\; Wv\in\tilde H^{-1/2}(\G_j)\} &\to
      & H^{-1/2}(\partial\G_j)\\
      v &\mapsto & \bt_j\cdot (V\bcurlG v)|_{\partial\G_j}
      \end{array}\right.
\ee
with
\[
   \<Wv, w\>_{\G_j}
   =
   \<\bcurlS{\G_j} w, V\bcurlG v\>_{\G_j}
   +
   \<\bt_j\cdot V\bcurlG v, w\>_{\partial\G_j}
   \quad\forall w\in H^1(\G_j).
\]
Here, $\bt_j$ is the unit tangential vector along the boundary of $\G_j$,
with mathematically positive orientation which is compatible with the direction
of the normal vector $\bn$.
In fact, it is easy to see that the operator \eqref{T_global} maps to
$H^{-\epsilon}(\partial\G_j)$ for any $\epsilon>0$ so that the test functions
$w$ in \eqref{IP} can be less regular (we don't make use of this here).
Below, we need an integration-by-parts formula on sub-domains (sub-surfaces)
$Q\subset\G$. In order to have a well-defined bilinear form $\<Wv, w\>_Q$
for any sufficiently regular subset $Q$ of $\G$ we assume that $Wv\in L^2(\G)$.
Then analogous arguments prove that for any Lipschitz surface $Q\subset\G$
with unit tangential vector $\bt_Q$ (with mathematically positive orientation)
there holds
\be \label{T_local}
      \left\{\begin{array}{cll}
      \{v\in H^{1/2}(\G);\; Wv\in L^2(\G)\} &\to
      & H^{-1/2}(\partial Q)\\
      v &\mapsto & \bt_Q\cdot (V\bcurlG v)|_{\partial Q}
      \end{array}\right.
\ee
with
\be \label{IP}
   \<Wv, w\>_Q
   =
   \<\bcurlS{Q} w, V\bcurlG v\>_Q
   +
   \<\bt_Q\cdot V\bcurlG v, w\>_{\partial Q}
   \quad\forall w\in H^1(Q).
\ee

%%%%%%%%%%%%%%%%%%%%%%%%%%%%%%%%%%%%%%%%%%%%%%%%%%%%%%%%%%%%%%%%%%%%%%%%%%%%%%%%
\section{Discontinuous Galerkin method}
\label{sec_DG}
\setcounter{equation}{0}
\setcounter{figure}{0}
\setcounter{table}{0}

Let $\CQ_h$ be a conforming quasi-uniform mesh which is compatible with the
faces of $\G$ and whose (closed) elements are shape-regular triangles or
quadrilaterals $Q$.
The maximum diameter of the elements is denoted by $h$.
The set of edges of $\CQ_h$ is denoted by $\CE_h$.
We also need the skeleton of the mesh
\[
   \gamma_h := \cup_{e\in\CE_h}\bar e.
\]
For sufficiently smooth functions $v$ on $\G$ we define jumps
%and averages
as follows.
For $e\in\CE_h$ with $e=Q_1\cap Q_2$, $Q_1,Q_2\in\CQ_h$,
\be \label{jump}
   [v]|_e := (v|_{Q_1}-v|_{Q_2})|_e.
   %\quad \{v\}|_e := \frac 12(v|_{Q_1}+v|_{Q_2})|_e
\ee
Here, the selection of the numbering
$Q_1$, $Q_2$ is arbitrary but fixed.
%Jumps and averages of vector functions are defined componentwise.

We also assign unit tangential vectors,
$\bt_Q$ on the boundary of $Q\in\CQ_h$.
On edges, unit tangential vectors are inherited
(using the association $e\mapsto Q_1$ previously mentioned):
\be \label{t_e}
   \bt_e:= \bt_{Q_1}|_e\quad  e\in\CE_h.
\ee
We define the broken surface curl operator for sufficiently smooth functions by
\[
   (\bcurl_h v)|_Q := \bcurlS{Q}(v|_Q)\quad\forall Q\in\CQ_h.
\]
We use discontinuous $hp$-spaces with variable degree for discretization.
For a triangle $Q\in\CQ_h$ let $P_p(Q)$ denote the space of polynomials
of total degree $p$, and for quadrilaterals let $P_p(Q)$ be the space
of (bi)linearly transformed polynomials of degree $p$ in both coordinates
on a reference square. Throughout, we will use the generic notation $\hat Q$ for
a reference element (the unit square for quadrilaterals and a fixed triangle for
triangular elements).

For given polynomial degrees $p=p(Q)$ (independent for different elements $Q$)
we introduce discrete spaces
\[
	X_{hp} := \{v\in L^2(\G);\; v|_Q\in P_{p(Q)}(Q)\ \forall Q\in\CQ_h\}
\]
and the subspaces
\[
	X_{hp}^0 := \{v\in X_{hp};\; \<v,1\>_\G=0\}.
\]
The discontinuous Galerkin boundary element method then reads:

{\it Find $u_{hp} \in X_{hp}^0$ such that}
\be \label{DGBEM}
   a_h(u_{hp},v) = \<f, v\>_\G \quad\forall v\in X_{hp}^0.
\ee
Here,
\be \label{bil}
  a_h(v,w)
  :=
  \<V \bcurl_h v, \bcurl_h w\>_\G
  + \<Tv, [w]\>_{\gamma_h} - \<[v], Tw\>_{\gamma_h}
  + \nu \<[v], [w]\>_{\gamma_h}
\ee
where $\nu > 0$ is a given parameter.
%and $\sigma\in\{-1,1\}$ are given parameters.
Furthermore, the operator $T$ is defined as
\be \label{T}
   (T v)|_{e} := (V \bcurl_h v)|_e \cdot \bt_e \qquad (e\in\CE_h).
\ee
It goes without further discussion that the integral mean zero condition
in $X_{hp}^0$ can be implemented by a rank one Lagrangian multiplier.

Note that $\bcurl_h v\in L^2(\G)$ for $v\in X_{hp}$ so that the tangential
trace of $V\bcurl_h v\in H^1(\G)$ is well defined
without employing formula \eqref{IP}.

The analysis will be based on ``broken'' Sobolev (semi-) norms
\[
   |v|_{H^s(\CQ_h)}^2 := \sum_{Q\in\CQ_h} |v|_{H^s(Q)}^2,
   \qquad
   \|v\|_{H^s_\nu(\CQ_h)}^2 :=
   |v|_{H^s(\CQ_h)}^2  + \nu \|[v]\|_{L^2(\gamma_h)}^2 + |\int_\G v|^2
   \quad (1/2 \le s\le 1).
\]

\begin{theorem} \label{thm_cea}
Let $u\in H^r(\G)$ with $r\in (1/2,3/2)$ be the solution of \eqref{weak_org},
and let $\nu>0$, $\delta\in (0,1/2)$. Then, the discrete problem \eqref{DGBEM}
is uniquely solvable and there exists a constant $C>0$, depending on
$r$ and $\delta$, but not on $\nu$, $u$, the actual mesh and polynomial degrees,
such that for any $s\in(1/2,\min\{r,1-\delta\}]$
there holds the quasi-optimal error estimate
\begin{multline*}
   \|u-u_{hp}\|_{H^{1/2}_\nu(\CQ_h)}
   \le C
   \\
   \inf_{v\in X_{hp}^0\cap C^0(\G)}
   \Bigl\{
      \|u-v\|_{H^{1/2}(\G)}
      +
      \frac 1{\nu^{1/2}(s-1/2)^{3/2}}
      \Bigl(
         h^{-1/2} \|u-v\|_{L^2(\G)}
         +
         h^{s-1/2} \|u-v\|_{H^s(\G)}
      \Bigr)
   \Bigr\}.
\end{multline*}
\end{theorem}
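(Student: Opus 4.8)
The plan is to follow the classical Strang framework for non-conforming methods, adapted to the fractional-order boundary element setting. The argument splits into three ingredients: (i) discrete coercivity (G\aa rding-type ellipticity) of $a_h$ on $X_{hp}^0$ with respect to the broken norm $\|\cdot\|_{H^{1/2}_\nu(\CQ_h)}$, (ii) boundedness/consistency of $a_h$, and (iii) an abstract Strang lemma combining the two. The non-standard feature here is that the consistency error must be controlled not by the energy norm alone but by a combination of the $L^2$, $H^{1/2}$ and $H^s$ ($s>1/2$) norms of $u-v$, which is exactly why three norms appear in the final estimate.

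First I would establish discrete ellipticity. Since the conforming bilinear form $\<V\bcurlG\cdot,\bcurlG\cdot\>_\G$ is elliptic on $H^{1/2}(\G)/\R$, and $V$ is positive definite on $H^{-1/2}(\G)$, the volume term $\<V\bcurl_h v,\bcurl_h v\>_\G$ together with the penalty $\nu\|[v]\|^2_{L^2(\gamma_h)}$ and the mean-value term should control $\|v\|^2_{H^{1/2}_\nu(\CQ_h)}$, \emph{provided} the antisymmetric consistency terms $\<Tv,[v]\>_{\gamma_h}-\<[v],Tv\>_{\gamma_h}$ cancel — which they do, being skew. Hence $a_h(v,v)=\<V\bcurl_h v,\bcurl_h v\>_\G+\nu\|[v]\|^2_{L^2(\gamma_h)}$, and I would argue that this dominates $\|v\|^2_{H^{1/2}_\nu(\CQ_h)}$ up to a constant independent of $h$, $p$ and $\nu$. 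The subtle point is that $\<V\bcurl_h v,\bcurl_h v\>_\G\gtrsim |v|^2_{H^{1/2}(\CQ_h)}$ must hold with a \emph{mesh-independent} constant for discontinuous $v$; this is where one needs the global mapping properties of $V$ rather than element-local ones (the key departure from the authors' earlier lowest-order work). Unique solvability of \eqref{DGBEM} follows immediately from ellipticity on the finite-dimensional space $X_{hp}^0$.

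Next, consistency. For the exact solution $u$ (which by the regularity hypothesis lies in $H^r(\G)\subset C^0$ for $r>1/2$, so $[u]=0$ on $\gamma_h$ and $Wu=f\in L^2(\G)$), I would use the integration-by-parts identity \eqref{IP} element by element: summing $\<Wu,w\>_Q=\<\bcurlS{Q}w,V\bcurlG u\>_Q+\<\bt_Q\cdot V\bcurlG u,w\>_{\partial Q}$ over $Q\in\CQ_h$, the boundary terms reorganize into skeleton integrals $\<Tu,[w]\>_{\gamma_h}$ (interior edges appear twice with opposite tangential orientation against the jump of $w$), giving $a_h(u,w)=\<f,w\>_\G$ for all $w\in X_{hp}^0$ since $[u]=0$ kills the remaining two terms. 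Thus Galerkin orthogonality holds: $a_h(u-u_{hp},w)=0$ for all $w\in X_{hp}^0$. Writing $u-u_{hp}=(u-v)+(v-u_{hp})$ with $v\in X_{hp}^0\cap C^0(\G)$ arbitrary, ellipticity applied to $w:=v-u_{hp}\in X_{hp}^0$ reduces everything to bounding $a_h(u-v,w)$ by $\|u-v\|_{(\text{three norms})}\,\|w\|_{H^{1/2}_\nu(\CQ_h)}$.

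The main obstacle — and the technical heart of the paper — is estimating the term $\<T(u-v),[w]\>_{\gamma_h}=\<(V\bcurl_h(u-v))\cdot\bt_e,[w]\>_{\gamma_h}$ (the symmetric counterpart $\<[u-v],Tw\>_{\gamma_h}$ vanishes since $u-v$ is continuous). One must bound $\|(V\bcurl_h(u-v))\cdot\bt_e\|$ on edges by boundary-surface Sobolev norms and then trade against $\nu^{1/2}\|[w]\|_{L^2(\gamma_h)}$; a trace inequality on each $Q$ plus the mapping $V:\widetilde H^{-1/2}\to H^{1/2}$ localized via \eqref{T_local}, together with a scaled ($hp$-explicit) trace estimate $\|\phi\|_{H^{-1/2}(\partial Q)}\lesssim h^{-1/2}\|\phi\|_{L^2(Q)}+h^{s-1/2}\|\phi\|_{H^s(Q)}$ applied to $\phi=\bcurl_h(u-v)$, will produce exactly the factor $h^{-1/2}\|u-v\|_{L^2}+h^{s-1/2}\|u-v\|_{H^s}$ after summing over elements and invoking quasi-uniformity. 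The delicate $(s-1/2)^{-3/2}$ and $\nu^{-1/2}$ dependence comes from tracking the blow-up of the trace constant and of the $H^s$-to-$H^{1/2}$ norm-equivalence constant as $s\downarrow 1/2$, combined with one power of $\nu^{-1/2}$ from the Young's-inequality absorption of $\|[w]\|_{L^2(\gamma_h)}$ against the $\nu\|[w]\|^2_{L^2(\gamma_h)}$ term in the energy norm. I expect the bookkeeping of these constants, and the $hp$-explicit trace/lifting estimates on a single element, to be the steps requiring genuine care; everything else is assembly.
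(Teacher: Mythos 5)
Your overall architecture (discrete ellipticity, consistency/Galerkin orthogonality, a Strang argument, and an $s$-explicit element trace estimate for the $T$-term) is exactly the paper's. However, your reduction step hides the main structural point and, as stated, has a gap. You propose to bound $a_h(u-v,w)$ by (three norms of $u-v$) times $\|w\|_{H^{1/2}_\nu(\CQ_h)}$ and then divide by plain coercivity. For the volume term $\<V\bcurlG(u-v),\bcurl_h w\>_\G$ this would require the converse bound $\|\bcurl_h w\|_{H^{-1/2}(\G)}\lesssim\|w\|_{H^{1/2}_\nu(\CQ_h)}$, uniformly in $h$, $p$ and $\nu$, for \emph{discontinuous} $w$ — an estimate you neither state nor prove, and which is delicate: the scaling of the dual norms $\tilde H^{-1/2}(Q)$ on small elements is not homogeneous and naive arguments lose a factor $h^{-1/2}$, while the jump penalty in $\|\cdot\|_{H^{1/2}_\nu(\CQ_h)}$ is not $h$-weighted, so nothing absorbs such a loss. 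The paper sidesteps this entirely by proving ellipticity in the product form $a_h(w,w)\gtrsim\bigl(\|\bcurl_h w\|_{H^{-1/2}(\G)}+\sqrt{\nu}\,\|[w]\|_{L^2(\gamma_h)}\bigr)\|w\|_{H^{1/2}_\nu(\CQ_h)}$ (Proposition~\ref{prop_ell}), so that the denominator of the Strang quotient is precisely the quantity against which the two surviving consistency terms are paired. Relatedly, the inequality $\<V\bcurl_h v,\bcurl_h v\>_\G\gtrsim|v|^2_{H^{1/2}(\CQ_h)}$, which you merely assert ``must hold'', is the crux of discrete ellipticity; its proof is not a consequence of ``global mapping properties of $V$'' alone but needs the reference-element estimate \eqref{curl_closed} combined with the fractional scaling properties \eqref{scale2}--\eqref{scale3} and the localization \eqref{DD1}.

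Two further corrections on the $T$-term and the hypotheses. Your scaled trace estimate is applied to $\phi=\bcurl_h(u-v)$, which for $u\in H^r(\G)$ with $r<1$ has negative order and therefore no trace; the estimate must be applied elementwise to $\psi=V\bcurlG(u-v)\in H^s(Q)$, $s>1/2$, i.e.\ \eqref{trace} after scaling, which is what produces $h^{-1/2}\|V\bcurlG(u-v)\|_{L^2}$ and $h^{s-1/2}|V\bcurlG(u-v)|_{H^s}$. Also, the factor $(s-1/2)^{-3/2}$ is not ``trace constant plus $H^s$-to-$H^{1/2}$ equivalence'': it is $(s-1/2)^{-1/2}$ from the trace estimate times $(s-1/2)^{-1}$ from the equivalence \eqref{equiv} of $\tilde H^{s-1}(\G_j)$ and $H^{s-1}(\G_j)$, which is needed to localize $\bcurlG$ to the faces before invoking the continuity of $V$ — this is exactly why the exponent improves to $1/2$ for a single smooth face (Theorem~\ref{thm_cea_plane}). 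Finally, $H^r(\G)\not\subset C^0(\G)$ for $r\le 1$ on a two-dimensional surface; what is needed (and true) for consistency is $[u]=0$ in the trace sense, and the paper in fact uses $u\in H^1(\G)$ (a consequence of $f\in L^2(\G)$) so that $\bcurl_h u=\bcurlG u\in L^2(\G)$ and $Tu$ is well defined. The parameter $\delta$ in the statement is there so that the various norm equivalences are uniform for $s$ bounded away from $1$; your constant bookkeeping should record this.
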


A proof of this result will be given at the end of Section~\ref{sec_proofs}.
The parameter $\delta$ above is needed to maintain a fixed positive distance
of $s$ to $1$ so that norms involved in the proof are uniformly equivalent and the
constant $C$ does not depend on $s$.
This will be needed for the estimate of the $p$-version when $s\to 1/2$,
see Corollary~\ref{cor_p} below.
First let us consider the $h$-version with quasi-uniform meshes and piecewise
(bi)linear functions. Standard approximation results prove that in this case
the discontinuous Galerkin boundary element method performs as well as
the standard (conforming) BEM.

\begin{corollary}[$h$-version lowest degree] \label{cor_h}
Let $p=1$. For $u\in H^r(\G)$ with $r<3/2$ there holds
\[
   \|u-u_{hp}\|_{H^{1/2}_\nu(\CQ_h)}
   \lesssim
   (1+\nu^{-1/2}) h^{r-1/2}\|u\|_{H^r(\G)}.
\]
\end{corollary}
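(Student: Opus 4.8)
The plan is to derive Corollary~\ref{cor_h} directly from Theorem~\ref{thm_cea} by choosing a good conforming competitor $v$ in the infimum and bounding the three approximation terms $\|u-v\|_{H^{1/2}(\G)}$, $h^{-1/2}\|u-v\|_{L^2(\G)}$ and $h^{s-1/2}\|u-v\|_{H^s(\G)}$ using standard $hp$-approximation theory on quasi-uniform meshes. Since $p=1$ is fixed, there is no $p$-dependence to track, and the factor $1/(\nu^{1/2}(s-1/2)^{3/2})$ can be absorbed into the generic constant once we fix a convenient value of $s$. The only mild subtlety is the presence of the full $H^r(\G)$-norm on $u$ with $r\in(1/2,3/2)$, which must control approximation in each of $L^2$, $H^{1/2}$ and $H^s$.

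First I would pick $v$ to be the (continuous) piecewise linear interpolant (or a Scott–Zhang-type quasi-interpolant to handle the low regularity $r$ near $1/2$) of $u$, modified by subtracting a constant so that $\<v,1\>_\G=0$; since $\<u,1\>_\G=0$ this modification costs only $|\int_\G(u-\tilde v)|\lesssim\|u-\tilde v\|_{L^2(\G)}$, which is of lower order, so $v\in X_{hp}^0\cap C^0(\G)$ is admissible. For such a quasi-interpolant on a quasi-uniform mesh of meshsize $h$ one has the standard estimates $\|u-v\|_{L^2(\G)}\lesssim h^{r}\|u\|_{H^r(\G)}$, $\|u-v\|_{H^{1/2}(\G)}\lesssim h^{r-1/2}\|u\|_{H^r(\G)}$, and, choosing any fixed $s\in(1/2,\min\{r,1-\delta\}]$ (for definiteness $s$ just above $1/2$, e.g. $s=\min\{r,3/4\}$ when $r>1/2$), $\|u-v\|_{H^s(\G)}\lesssim h^{r-s}\|u\|_{H^r(\G)}$. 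These follow from the Bramble–Hilbert/Deny–Lions machinery together with the norm equivalences and fractional interpolation recalled in Section~\ref{sec_sobolev}; for $r<1$ one uses a quasi-interpolant that is stable in $H^r$, for $r\in[1,3/2)$ the nodal interpolant already suffices.

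Substituting into the bound of Theorem~\ref{thm_cea} gives
\[
   \|u-u_{hp}\|_{H^{1/2}_\nu(\CQ_h)}
   \lesssim
   h^{r-1/2}\|u\|_{H^r(\G)}
   +
   \nu^{-1/2}\bigl(h^{-1/2}h^{r}+h^{s-1/2}h^{r-s}\bigr)\|u\|_{H^r(\G)}
   \lesssim
   (1+\nu^{-1/2})h^{r-1/2}\|u\|_{H^r(\G)},
\]
since both terms in the parenthesis equal $h^{r-1/2}$ and the $(s-1/2)^{-3/2}$ factor is now a fixed constant (as $s$ is chosen independently of $h$ and $p$). This is exactly the claimed estimate.

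The main (and really only) obstacle is the approximation estimate in the fractional norm $H^s(\G)$ across element interfaces: one needs $\|u-v\|_{H^s(\G)}\lesssim h^{r-s}\|u\|_{H^r(\G)}$ with the global (non-broken) $H^s$-norm on the left, which requires $v$ to be globally continuous — hence the restriction of the infimum to $X_{hp}^0\cap C^0(\G)$ in the theorem is essential here — plus a covering/scaling argument that the interface contributions to the double integral in $|u-v|_{H^s(\G)}$ are controlled since $s<1$ stays bounded away from $1$ (this is where $\delta$ enters). Everything else is a routine combination of the theorem with textbook approximation bounds.
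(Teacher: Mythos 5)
Your argument is correct and is exactly what the paper intends: Corollary~\ref{cor_h} is stated with only the remark that ``standard approximation results'' combined with Theorem~\ref{thm_cea} yield it, and your proof fills in precisely that route (a continuous piecewise linear quasi-interpolant with the mean-zero correction, a fixed $s\in(1/2,\min\{r,1-\delta\}]$ so that $(s-1/2)^{-3/2}$ is absorbed into the constant, and the standard bounds $\|u-v\|_{H^t(\G)}\lesssim h^{r-t}\|u\|_{H^r(\G)}$ for $t=0,1/2,s$). No discrepancies with the paper's reasoning.
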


Fixing a mesh and improving the approximation by increasing polynomial degrees
leads to the $p$-version. Convergence analysis for conforming methods and
based on standard Sobolev regularity proves convergence orders which are analogous
to those of the $h$-version (regularity order minus $1/2$), but without upper bound.
In the case of polyhedral surfaces,
regularity is limited and depends on the angles at edges and corners where singularities
appear. Specific approximation analysis for these singularities show that convergence
orders are twice that of the $h$-version when element boundaries are aligned with edges,
as is the case in this paper. In order to claim and prove a precise error estimate
we would need to recall the corresponding regularity results by
Dauge, von Petersdorff and Stephan \cite{Dauge_88_EBV,vP,vonPetersdorffS_90_RMB}.
In order to improve readability, and following the results from these references,
we reduce this presentation to the assumption
that the solution $u$ of \eqref{weak_org} can be written like
\be \label{reg}
   u = u_{\rm reg} + \sum_{\gamma\in\{\mbox{\footnotesize edges}\}} u^\gamma
               + \sum_{v\in\{\mbox{\footnotesize vertices}\}} u^v
               + \sum_{\gamma\in\{\mbox{\footnotesize edges}\},\;
                       v\in\{\mbox{\footnotesize vertices}\},\;
                       v\in\bar\gamma} u^{\gamma v}
\ee
where $u^\gamma$, $u^v$ and $u^{\gamma v}$ are the edge-, vertex-, and edge-vertex
singularities of $u$, and $u^{reg}$ is a remainder of higher regularity.
The sets $\{\mbox{edges}\}$ and $\{\mbox{vertices}\}$ denote the edges and vertices
of the polyhedral surface $\G$. The singularities are of the types (ignoring
cut-off functions and smoother parts)
\begin{align*}
   u^\gamma &= |\log \dist(\cdot,\gamma)|^l \dist(\cdot,\gamma)^\mu
               &&\hspace{-4em}(l\in\{0,\ldots,s\},\; s\in\N\cup\{0\},\; \mu > 1/2)
   \\
   u^v &= |\log \dist(\cdot,v)|^t \dist(\cdot,v)^\lambda
          &&\hspace{-4em}(t\in\{0,\ldots,q\},\; q\in\N\cup\{0\},\; \lambda > 0)
   \\
   u^{\gamma v} &= |\log \dist(\cdot,\gamma_1)|^l\; |\log \dist(\cdot,\gamma_2)|^t
                     \dist(\cdot,\gamma_1)^{\lambda-\mu}
                     \dist(\cdot,\gamma_2)^\mu
                     &&(l\in\{0,\ldots,s\},\; t\in\{0,\ldots,q\},
                 \\ & &&\ \mu>1/2,\;\lambda>0)
\end{align*}
where $\gamma_1$ and $\gamma_2$ denote the edges that meet at the vertex $v$.
The smallest parameters $\mu$, $\lambda$, and largest number $s$, $q$ limit
the convergence order of the $p$-version.

The following result shows that the $p$-version of the discontinuous Galerkin BEM
converges almost as fast as the conforming version: the error estimate behaves
the same asymptotically except for an additional $\log^{3/2} p$ factor which, however,
can be compensated by choosing $\nu\sim (\log p)^3$.

\begin{corollary}[$p$-version] \label{cor_p}
Let $X_{hp}$ be defined by a fixed mesh that consists of triangles and/or
parallelograms and let $p$ denote the minimum polynomial degree. We assume
that the solution $u$ of \eqref{weak_org} is of the type \eqref{reg} with
minimum parameters $\mu$ and $\lambda$ and maximum parameters $q$, $s$
and that the regular part $u_{\rm reg}$ is as smooth as needed, which can be
achieved by considering sufficiently many singularity terms.
Then there holds
\[
   \|u-u_{hp}\|_{H^{1/2}_\nu(\CQ_h)}
   \lesssim
   (1 + \nu^{-1/2} \log^{3/2}(1+p)) p^{-2\min\{\lambda+1/2,\mu\}} \log^\beta (1+p),
\]
where
\[
   \beta=\left\{\begin{array}{ll}
      s+q + 1/2 & \mbox{if}\ \lambda=\mu-1/2, \\
      s+q       & \mbox{otherwise}.
   \end{array}\right.
\]
\end{corollary}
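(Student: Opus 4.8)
The plan is to invoke Theorem~\ref{thm_cea} and estimate the infimum on the right-hand side by constructing a suitable conforming interpolant $v\in X_{hp}^0\cap C^0(\G)$ of the exact solution $u$, then inserting the known $p$-version approximation rates for the singularity functions appearing in the decomposition \eqref{reg}. Since the mesh is fixed and aligned with the edges and vertices of $\G$, the singularities $u^\gamma$, $u^v$, $u^{\gamma v}$ lie on element boundaries, and we may appeal to the $hp$-approximation results of von Petersdorff and Stephan (and the regularity of Dauge) that give algebraic $p$-rates with logarithmic factors in the three norms $L^2(\G)$, $H^{1/2}(\G)$ and $H^s(\G)$ that enter the Cea estimate. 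The subtlety is that the $H^s(\G)$-error with $s$ close to $1/2$ is precisely what controls the asymptotic order, so one must track the dependence on $s-1/2$ and then optimize the choice of $s\in(1/2,\min\{r,1-\delta\}]$.

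First I would recall, for each singularity type, the best $p$-version approximation error by polynomials of degree $p$ on the reference element, measured in $H^t$ for $t\in\{0,1/2,s\}$: for an edge singularity $\dist(\cdot,\gamma)^\mu|\log\dist(\cdot,\gamma)|^l$ the error in $H^t$ behaves like $p^{-2(\mu-t)}\log^{l+\theta}(1+p)$ with an extra $1/2$ in the log exponent in the borderline case, and analogously for vertex singularities with exponent $p^{-2(\lambda+1/2-t)}$ (the shift by $1/2$ coming from the two-dimensional point singularity) and for edge-vertex singularities the rates combine multiplicatively, the dominant contribution being governed by $\min\{\lambda+1/2,\mu\}$. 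The regular part $u_{\rm reg}$ contributes a higher-order term that is negligible once enough singularity terms are subtracted. I would then assemble these into bounds
\[
  \|u-v\|_{L^2(\G)}\lesssim p^{-2m-1}\log^{\beta}(1+p),\quad
  \|u-v\|_{H^{1/2}(\G)}\lesssim p^{-2m}\log^{\beta}(1+p),\quad
  \|u-v\|_{H^s(\G)}\lesssim p^{-2m+(2s-1)}\log^{\beta}(1+p),
\]
writing $m:=\min\{\lambda+1/2,\mu\}$, where $h$ is now a fixed constant and is absorbed.

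Plugging these into the estimate of Theorem~\ref{thm_cea}, the $H^{1/2}$-term gives $p^{-2m}\log^\beta(1+p)$ directly, while the bracketed term contributes
\[
  \frac{1}{\nu^{1/2}(s-1/2)^{3/2}}\Bigl(p^{-2m-1}+p^{-2m+2s-1}\Bigr)\log^\beta(1+p)
  \lesssim \frac{1}{\nu^{1/2}(s-1/2)^{3/2}}\,p^{-2m+2s-1}\log^\beta(1+p),
\]
since $2s-1>0$ makes the second summand dominant. The main obstacle — and the point of keeping $s$ free — is now a one-parameter optimization: choosing $s-1/2\sim 1/\log(1+p)$ makes $p^{2s-1}=e^{(2s-1)\log p}$ a bounded constant, at the cost of the factor $(s-1/2)^{-3/2}\sim\log^{3/2}(1+p)$. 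This yields the claimed $\nu^{-1/2}\log^{3/2}(1+p)\,p^{-2m}\log^\beta(1+p)$ contribution, and combining with the $H^{1/2}$-term produces exactly
\[
  \|u-u_{hp}\|_{H^{1/2}_\nu(\CQ_h)}
  \lesssim
  \bigl(1+\nu^{-1/2}\log^{3/2}(1+p)\bigr)\,p^{-2m}\log^\beta(1+p).
\]
One must check that the admissibility constraint $s\le\min\{r,1-\delta\}$ is not violated: for $p$ large, $s=1/2+c/\log(1+p)$ is automatically $\le 1-\delta$, and one picks $\delta$ and the regularity exponent $r$ (via sufficiently many singularity terms in $u_{\rm reg}$) so that $s<r$ as well; the constant $C$ of Theorem~\ref{thm_cea} depends on $\delta$ and $r$ but not on $p$, so the bound is uniform. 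The determination of $\beta$, including the $+1/2$ in the borderline case $\lambda=\mu-1/2$, follows by bookkeeping the log exponents $l\le s$, $t\le q$ in the worst singularity term that realizes the minimum rate $m$.
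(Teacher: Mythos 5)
Your proposal is correct and follows essentially the same route as the paper's proof: invoke Theorem~\ref{thm_cea} on the fixed mesh (so that $h$ is a constant and the $L^2$-term is absorbed into the $H^s$-term), use the known conforming $p$-version approximation estimate $\|u-v\|_{H^s(\G)}\lesssim p^{-2(\alpha-s)}\log^\beta(1+p)$ with $\alpha=\min\{\lambda+1,\mu+1/2\}$ (the paper cites \cite[Theorem 9.1]{SchwabS_96_OpA}, interpolated to $s\in[0,1]$), and then take $s=1/2$ for the first term and $s=1/2+\log^{-1}(1+p)$ for the second, so that $p^{2s-1}$ stays bounded while $(s-1/2)^{-3/2}$ produces the $\log^{3/2}(1+p)$ factor. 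Only a cosmetic slip: the per-singularity rates you quote in passing ($p^{-2(\mu-t)}$ for edges, $p^{-2(\lambda+1/2-t)}$ for vertices) are each one power of $p$ weaker than the correct ones ($p^{-2(\mu+1/2-t)}$ and $p^{-2(\lambda+1-t)}$) and are inconsistent with the assembled bounds you actually use, which are the correct ones and carry the argument through unchanged.
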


\begin{proof}
By Theorem~\ref{thm_cea} there holds for a fixed mesh
\[
   \|u-u_{hp}\|_{H^{1/2}_\nu(\CQ_h)}
   \lesssim
   \inf_{v\in X_{hp}^0\cap C^0(\G)}
   \Bigl(\|u-v\|_{H^{1/2}(\G)} + \nu^{-1/2}(s-1/2)^{-1} \|u-v\|_{H^s(\G)}\Bigr)
\]
and by \cite[Theorem 9.1]{SchwabS_96_OpA} we can find $v\in X_{hp}^0\cap C^0(\G)$
such that
\[
   \|u-v\|_{H^s(\G)} \lesssim p^{-2(\alpha - s)} \log^\beta (1+p)
\]
with $\alpha=\min\{\lambda+1,\mu+1/2\}$ (the integral mean zero condition can
be added in a postprocessing step and does not diminish the convergence rate).
This bound holds for $s\in [0,1]$ by interpolation.
Choosing $s=1/2$ and $s=1/2+\log^{-1}(1+p)$ (where the interpolation norm is uniformly
equivalent to the Sobolev-Slobodeckij norm; for details see
Section~\ref{sec_proofs} and, in particular, Remark~\ref{rem_norms})
and combining both bounds proves
\[
   \|u-u_{hp}\|_{H^{1/2}_\nu(\CQ_h)}
   \lesssim
   (1 + \nu^{-1/2} \log^{3/2}(1+p)) p^{-2(\alpha-1/2)} \log^\beta (1+p),
\]
which is the assertion.
\end{proof}

\begin{remark} \label{rem_p}
In theory, the additional factor of $\log^{3/2} p$ in the $p$-estimate by
Corollary~\ref{cor_p} can be compensated by choosing
$\nu\sim \log^3 p$. The $\log^{3/2} p$ perturbation in the error
estimate is due to the factor $(s-1/2)^{-3/2}$ in the abstract error estimate
of Theorem~\ref{thm_cea}. In the case of an open smooth surface (one face),
there appears a factor $(s-1/2)^{-1/2}$ rather than $(s-1/2)^{-3/2}$,
see Theorem~\ref{thm_cea_plane}. Therefore,
in this case, the $p$-estimate has only a perturbation of $(\log p)^{1/2}$
which can be compensated by choosing $\nu\sim\log p$ instead of $\nu\sim \log^3 p$
as in the case of multiple faces.
Of course, these are only upper bounds which are not known to be exact and
this phenomenon is difficult to observe numerically.
\end{remark}

By using $hp$-approximation results (for quasi-uniform meshes) from
\cite{BespalovH_08_hpB} one can easily deduce error estimates
for the corresponding $hp$-version of the discontinuous Galerkin BEM.
Nevertheless, in order to keep the presentation
simple, we restrict ourselves to the case of arbitrary but fixed polynomial degrees.

\begin{corollary}[$h$-version with arbitrary polynomial degree] \label{cor_hp}
Let the subspaces $X_{hp}$ be defined by a sequence of meshes with fixed (lowest) polynomial
degree $p$. We assume
that the solution $u$ of \eqref{weak_org} is of the type \eqref{reg} with
minimum parameters $\mu$ and $\lambda$ and maximum parameters $q$, $s$
and that the regular part $u_{\rm reg}\in H^1(\G)\cap\Pi_{j=1}^L H^k(\G_j)$.
Then there holds
\[
   \|u-u_{hp}\|_{H^{1/2}_\nu(\CQ_h)}
   \lesssim
   (1+\nu^{-1/2})
     \max\left\{
       h^{\min\,\{k-1/2,p+1/2\}},
       h^{\min\,\{\lambda+1/2,\mu\}}\,
       (1+|\log h|)^{\beta_1+\beta_2}
     \right\}
\]
where
\[
   \beta_1=\left\{\begin{array}{ll}
      s+q + 1/2 & \mbox{if}\ \lambda=\mu-1/2, \\
      s+q       & \mbox{otherwise}
   \end{array}\right.
\]
and
\[
   \beta_2 = \left\{\begin{array}{ll}
          \frac 12 &
          \hbox{if \ $p = \min\,\{\lambda,\,\gamma-\frac 12\}$},\\
   \noalign{\vskip3pt}
          0          & \hbox{otherwise.}\\
   \end{array}\right.
\]
\end{corollary}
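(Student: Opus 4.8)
The plan is to deduce Corollary~\ref{cor_hp} from Theorem~\ref{thm_cea} in exactly the same way Corollary~\ref{cor_p} follows from it, but now tracking the $h$-dependence instead of the $p$-dependence and using $hp$-approximation results for quasi-uniform meshes. First I would fix the mesh parameter $h$ and the (lowest) polynomial degree $p$, and invoke Theorem~\ref{thm_cea} with a convenient choice of $s$, say $s$ bounded away from $1/2$ and $1$ (the logarithmic subtlety that forced $s\to1/2$ in the $p$-version does not arise here, since the $h$-powers $h^{-1/2}$ and $h^{s-1/2}$ are already the relevant quantities and no $(s-1/2)^{-3/2}$ blow-up needs to be beaten by a $\log$). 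Thus for each admissible $v\in X_{hp}^0\cap C^0(\G)$ we get
\[
   \|u-u_{hp}\|_{H^{1/2}_\nu(\CQ_h)}
   \lesssim
   (1+\nu^{-1/2})
   \Bigl(\|u-v\|_{H^{1/2}(\G)} + h^{-1/2}\|u-v\|_{L^2(\G)} + h^{s-1/2}\|u-v\|_{H^s(\G)}\Bigr),
\]
with the constant independent of $h$, $p$, $\nu$.

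Next I would construct a good continuous piecewise-polynomial approximant $v$ to $u$ on the quasi-uniform mesh and estimate the three norms on the right. Decompose $u$ according to \eqref{reg} into the regular part $u_{\rm reg}\in H^1(\G)\cap\prod_j H^k(\G_j)$ and the edge-, vertex-, and edge-vertex singularities. For the regular part, standard quasi-uniform $hp$-approximation (continuous piecewise polynomials of degree $p$) gives $\|u_{\rm reg}-v_{\rm reg}\|_{H^{t}(\G)}\lesssim h^{\min\{k,p+1\}-t}\|u_{\rm reg}\|_{\prod_j H^k(\G_j)}$ for $t\in\{0,1/2,s\}$, yielding the contribution $h^{\min\{k-1/2,\,p+1/2\}}$ to the bound. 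For the singular parts I would use the approximation results of \cite{BespalovH_08_hpB} (as advertised in the text right before the corollary): each singularity of type $\dist(\cdot,\cdot)^\mu$ resp. $\dist(\cdot,\cdot)^\lambda$ with a logarithmic factor of power up to $s$ resp. $q$ is approximated, on a quasi-uniform mesh by continuous piecewise polynomials of degree $p$, with a rate governed by the singularity exponent and the polynomial degree, i.e. producing a power $h^{\min\{\lambda+1,\,\mu+1/2,\,p+1\}-t}$ measured in $H^t$, together with a logarithmic factor $(1+|\log h|)^{\beta_1+\beta_2}$: the exponent $\beta_1$ collects the $\log$-powers $s$, $q$ already present in $u$ (with the extra $+1/2$ in the borderline case $\lambda=\mu-1/2$, exactly as in Corollary~\ref{cor_p}), and $\beta_2$ is the additional $\frac12$ that appears precisely when the approximation rate is limited by the polynomial degree coinciding with the singularity exponent, $p=\min\{\lambda,\gamma-\frac12\}$ — this is the well-known half-power-of-$\log$ loss in best approximation of $r^\alpha$ by degree-$\alpha$ piecewise polynomials on quasi-uniform meshes. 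Measuring these in $H^{1/2}$, $L^2$ (times $h^{-1/2}$) and $H^s$ (times $h^{s-1/2}$) all yield the same $h$-power $h^{\min\{\lambda+1/2,\mu\}}$ up to the constants' dependence on $s$ and $\delta$, which by Theorem~\ref{thm_cea} are allowed. Taking the worse of the regular and singular contributions gives the $\max\{\cdot,\cdot\}$ in the statement. Finally, the integral-mean-zero constraint defining $X_{hp}^0$ is imposed as a rank-one correction on $v$ as in the proof of Corollary~\ref{cor_p}, which changes $\|u-v\|$ only by a harmless multiple and does not affect the rate.

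The main obstacle is the second step: assembling, from \cite{BespalovH_08_hpB}, the precise simultaneous bounds on $\|u-v\|$ in the three norms $L^2$, $H^{1/2}$, $H^s$ for a \emph{single} continuous piecewise-polynomial $v$ on the quasi-uniform mesh, and verifying that the logarithmic bookkeeping produces exactly $\beta_1+\beta_2$ with the stated case distinctions — in particular getting the borderline $\lambda=\mu-1/2$ correction (for the edge–vertex singularity) and the polynomial-degree-limited correction $\beta_2$ right, and checking that the factor $(s-1/2)^{-1}$-type dependence in the $H^s$ estimate of \cite{BespalovH_08_hpB} stays bounded for $s$ fixed away from $1/2$ so that, after absorbing it into the generic constant, no further $\log h$ is spawned. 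Everything else — applying Theorem~\ref{thm_cea}, splitting $u$, handling the regular part, and the mean-zero postprocessing — is routine.
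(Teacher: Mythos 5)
Your proposal is correct and follows essentially the same route as the paper: apply Theorem~\ref{thm_cea} with $s$ fixed away from $1/2$ (so the $(s-1/2)^{-3/2}$ factor is absorbed into the constant) and bound the $L^2$, $H^{1/2}$ and $H^s$ best-approximation errors via the quasi-uniform $hp$-approximation results of \cite{BespalovH_08_hpB}. The ``main obstacle'' you flag is not one in the paper's proof: \cite[Theorem 7.1]{BespalovH_08_hpB} is cited as already providing, for a single continuous piecewise polynomial $v$, the bound $\|u-v\|_{H^s(\G)}\lesssim\max\{h^{\min\{k,p+1\}-s},\,h^{\min\{\lambda+1,\mu+1/2\}-s}(1+|\log h|)^{\beta_1+\beta_2}\}$ for all $0\le s<\min\{1,\lambda+1,\mu+1/2\}$, so the singular-function bookkeeping you propose to redo is exactly what that reference delivers.
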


\begin{proof}
By \cite[Theorem 7.1]{BespalovH_08_hpB} there exists $v\in X_{hp}\cap C^0(\G)$
such that
\[
     \|u-v\|_{H^s(\G)}
     \lesssim
     \max\left\{
       h^{\min\,\{k,p+1\}-s},
       h^{\min\,\{\lambda+1,\mu+1/2\}-s}\,
       (1+|\log h|)^{\beta_1+\beta_2}
     \right\}.
\]
for $0\le s<\min\{1,\lambda+1,\mu+1/2\}$.
Using this result together with the bound from Theorem~\ref{thm_cea} proves
the assertion.
\end{proof}

%%%%%%%%%%%%%%%%%%%%%%%%%%%%%%%%%%%%%%%%%%%%%%%%%%%%%%%%%%%%%%%%%%%%%%%%%%%%%%%%
\section{Technical results and proof of the main theorem}
\label{sec_proofs}
\setcounter{equation}{0}
\setcounter{figure}{0}
\setcounter{table}{0}

Throughout we will make use of the continuity and ellipticity of $V$
proved by Costabel for Lipschitz surfaces \cite{Costabel_88_BIO}:
\be \label{V_cont}
   V:\; H^{s-1}(\G) \to H^{s}(\G), \quad 0 \le s \le 1,
\ee
\be \label{V_posdef}
   \<V v, v\>_\G
   \gtrsim
   \|v\|_{H^{-1/2}(\G)}^2\qquad\forall v\in H^{-1/2}(\G).
\ee
We will need several technical results involving fractional order Sobolev norms.
So far we have defined the Sobolev-Slobodeckij norm. However, standard tool
to prove estimates in Sobolev spaces is interpolation theory. We therefore have
to deal with at least two different definitions which, usually, give rise to
equivalent norms, cf.~Remark~\ref{rem_norms} below. We use the K-method of
interpolation (see, e.g., \cite{LionsMagenes} for details).
For $\Omega\subset\R^n$ and $0<s<1$, the following spaces
can be equivalently defined (i.e., norms are equivalent)
\[
   H^s(\Omega) = \Big(L^2(\Omega), H^1(\Omega)\Big)_{s,2},\qquad
   \tilde H^s(\Omega) = \Big(L^2(\Omega), H_0^1(\Omega)\Big)_{s,2}.
\]
For negative $s$, we also need spaces defined by interpolation, i.e.
\[
   %H^s(\Omega) = \Big(H^{-1}(\Omega), L^2(\Omega)\Big)_{1+s,2},\qquad
   \tilde H^s(\Omega) = \Big(\tilde H^{-1}(\Omega), L^2(\Omega)\Big)_{1+s,2}
   \qquad (-1<s<0).
\]

\begin{remark} \label{rem_norms}
For fixed domain and fixed order, it is well known that norms in Sobolev spaces
defined by interpolation and by the double integral (Sobolev-Slobodeckij) are
equivalent. In order to avoid dependence on a variable domain (elements)
we use the equivalence on a fixed domain (reference element) by previous
transformation. We will need this equivalence for varying order $s$ close to $1/2$
where it is uniform, cf.~\cite[Proof of Corollary 4]{Heuer_01_ApS}.
%Furthermore, we need the equivalence of negative order norms ($s$ close to $-1/2$)
%defined by duality and by interpolation. This is again uniform when $s$ stays
%away from the integers $-1$, $0$, cf.~\cite[Proof of Corollary 2]{Heuer_01_ApS}.
\end{remark}

In order to be transparent, we indicate the type of norm used in the technical lemmas
below (i.e., specifying ``Sobolev-Slobodeckij'' or ``K-method'' or ``duality'').

\begin{lemma} \label{la_norms}
Let $\hat Q$ be the generic reference element
and $R$ a fixed Lipschitz domain (possibly $\hat Q$) with boundary $\partial R$.\\
(i) (Sobolev-Slobodeckij)
For any $s \in (-1/2,1/2)$, and any $v \in H^s(R)$, there holds
\be \label{equiv}
   \|v\|_{\tilde H^s(R)} \lesssim \frac{1}{1/2 - |s|} \|v\|_{H^s(R)}.
\ee
(ii) (Sobolev-Slobodeckij) For any $s \in (1/2,1]$, and any $v \in H^s(R)$, there holds
\be \label{trace}
   \|v\|_{L^2(\partial R)}
   \lesssim
   \frac{1}{\sqrt{s - 1/2}} \|v\|_{H^s(R)}.
\ee
(iii)
For a (bi)linear mapping $M_Q:\hat Q\to Q$ onto an element
$Q\in\CQ_h$ there holds
\be \label{scale1}
   \|v\|_{\tilde H^{s}(Q)}^2
   \simeq
   h^{2-2s} \|v\circ M_Q\|_{\tilde H^{s}(\hat Q)}^2
   \quad (v\in \tilde H^{s}(Q),\ s\in [0,1],\ \mbox{K-method}),
\ee
\be \label{scale2}
   |v|_{H^{s}(Q)}^2
   \simeq
   h^{2-2s} |v\circ M_Q|_{H^{s}(\hat Q)}^2
   \quad (v\in H^{s}(Q),\ s\in [0,1],\ \mbox{Sobolev-Slobodeckij})
\ee
and
\be \label{scale3}
   \|v\|_{H^{s}(Q)}^2
   \simeq
   h^{2-2s} \|v\circ M_Q\|_{H^{s}(\hat Q)}^2
   \quad (v\in H^{s}(Q),\ s\in [-1,0],\ \mbox{duality}).
\ee
\end{lemma}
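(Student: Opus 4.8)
\emph{Strategy.} The three parts are independent and I would treat them separately. Throughout, constants are allowed to depend on $R$ (respectively on $\hat Q$ and on the shape-regularity of $\CQ_h$) without comment; the only delicate issue is the behaviour of the constants as $s\to\pm1/2$ in (i)--(ii) and the exact power of $h$ in (iii).

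\emph{Part (i).} For $s\in(0,1/2)$ the semi-norm term of $\|v\|_{\tilde H^s(R)}$ is trivially $\le|v|_{H^s(R)}$, so the whole matter reduces to bounding the weighted integral $\int_R v(x)^2\dist(x,\partial R)^{-2s}\,dx$ by $|v|_{H^s(R)}^2$. This is the fractional Hardy inequality on the Lipschitz domain $R$; I would reduce it to a half-space by a partition of unity and Lipschitz charts (under which the Gagliardo semi-norm changes only by constants uniform in $s\in(0,1)$) and there invoke/derive the inequality with its constant made explicit, the point being that it degenerates like $(1/2-s)^{-1}$ as $s\uparrow1/2$. Adding $|v|_{H^s(R)}^2$, taking a square root and using $1/2-s\le1$ then gives \eqref{equiv}. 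For $s\in(-1/2,0)$ I would argue by duality: since, with pivot $L^2(R)$, one has $H^{-\sigma}(R)=(\tilde H^\sigma(R))'$ and $\tilde H^{-\sigma}(R)=(H^\sigma(R))'$, the embedding $H^\sigma(R)\hookrightarrow\tilde H^\sigma(R)$ of norm $\lesssim(1/2-\sigma)^{-1}$ just obtained for $\sigma=-s\in(0,1/2)$ transposes to $H^s(R)\hookrightarrow\tilde H^s(R)$ with the same norm, which is \eqref{equiv}.

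\emph{Part (ii).} I would first prove the trace estimate on the half-space $\R^n_+$ (trace onto $\{x_n=0\}$) and then localise. On $\R^n_+$: extend $v$ to $\R^n$ by even reflection, which is bounded on $H^s$ uniformly for $s\in[0,1]$; pass to Fourier variables, where the trace is recovered as $(2\pi)^{-1}\int_{\R}\hat v(\xi',\xi_n)\,d\xi_n$; apply Cauchy--Schwarz with the weight $(1+|\xi|^2)^{\pm s}$. This produces the factor $\int_{\R}(1+|\xi'|^2+\xi_n^2)^{-s}\,d\xi_n=(1+|\xi'|^2)^{1/2-s}B(1/2,s-1/2)$, and since $B(1/2,s-1/2)\simeq(s-1/2)^{-1}$ as $s\downarrow1/2$ while $(1+|\xi'|^2)^{1/2-s}\le1$, integration in $\xi'$ and Plancherel give $\|v|_{x_n=0}\|_{L^2(\R^{n-1})}^2\lesssim(s-1/2)^{-1}\|v\|_{H^s(\R^n_+)}^2$ (the Fourier-defined $H^s(\R^n)$-norm being dominated by the Sobolev--Slobodeckij one uniformly for $s\in[1/2,1]$). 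For a general fixed Lipschitz $R$ I would cover $\partial R$ by finitely many Lipschitz charts, use a subordinate partition of unity, and flatten each boundary piece by a bi-Lipschitz map — each transformation changing the relevant norms by constants uniform in $s\in(1/2,1]$ — and sum the local estimates to obtain \eqref{trace}.

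\emph{Part (iii) and the main obstacle.} For \eqref{scale2} I would change variables $x=M_Q(\hat x)$, $y=M_Q(\hat y)$ directly in the Gagliardo double integral; by shape-regularity (and, for quadrilaterals, the analogous uniform nondegeneracy of the bilinear map) one has $|\det DM_Q|\simeq h^2$ and $\|DM_Q\|\simeq\|DM_Q^{-1}\|^{-1}\simeq h$ on $\hat Q$, so $dx\,dy\simeq h^4\,d\hat x\,d\hat y$ and $|x-y|\simeq h|\hat x-\hat y|$, yielding the factor $h^4h^{-(2s+2)}=h^{2-2s}$. The same change of variables gives the endpoint cases of \eqref{scale1}, namely $\|v\|_{L^2(Q)}^2\simeq h^2\|v\circ M_Q\|_{L^2(\hat Q)}^2$ at $s=0$ and $|v|_{H^1(Q)}^2\simeq|v\circ M_Q|_{H^1(\hat Q)}^2$ at $s=1$ (using the $H^1$-semi-norm on $H^1_0$); consequently $K(t,v;L^2(Q),H^1_0(Q))\simeq h\,K(t/h,v\circ M_Q;L^2(\hat Q),H^1_0(\hat Q))$, and substituting $\tau=t/h$ in $\|v\|_{\tilde H^s(Q)}^2\simeq\int_0^\infty(t^{-s}K(t,v))^2\,dt/t$ produces exactly $h^2h^{-2s}$, i.e.\ \eqref{scale1}. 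Finally \eqref{scale3} follows by duality: for $s\in[-1,0]$ write $\|v\|_{H^s(Q)}=\sup\{|\<v,w\>_Q|/\|w\|_{\tilde H^{-s}(Q)};\;0\ne w\in\tilde H^{-s}(Q)\}$, change variables in the pairing (up to the harmless Jacobian factor $\simeq h^2$) and rescale $\|w\|_{\tilde H^{-s}(Q)}$ by \eqref{scale1} with order $-s\in[0,1]$, which altogether gives $\|v\|_{H^s(Q)}\simeq h^{1-s}\|v\circ M_Q\|_{H^s(\hat Q)}$. Of these, (iii) is routine bookkeeping; the real work — and the main obstacle — is the sharp $s$-dependence in (i) and (ii): the $(1/2-s)^{-1}$ blow-up of the fractional Hardy constant and the $(s-1/2)^{-1/2}$ blow-up of the trace constant, the latter being precisely the rate at which the Beta integral $\int_{\R}(1+t^2)^{-s}\,dt$ diverges at $s=1/2$.
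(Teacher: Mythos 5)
Your proposal is essentially correct, but it takes a different route from the paper: the paper does not prove these estimates from scratch at all. It disposes of (i) by citing \cite[Lemma~5]{Heuer_01_ApS}, of (ii) by citing \cite[Lemma~4.3]{GaticaHH_09_BLM}, observes that \eqref{scale2} is immediate from the definition of the Sobolev--Slobodeckij semi-norm (the same change of variables you perform), and for \eqref{scale1}, \eqref{scale3} cites \cite[Lemma~2]{Heuer_01_ApS} for the affine/reference-square case, remarking that triangles and non-parallelogram quadrilaterals ``follow similarly.'' You instead supply self-contained arguments: a fractional Hardy inequality plus duality transposition for (i), an even-reflection/Fourier trace estimate whose Beta-integral $\int_{\fR}(1+t^2)^{-s}\,dt\simeq(s-1/2)^{-1}$ produces exactly the claimed blow-up for (ii), and direct change of variables respectively $K$-functional scaling $K(t,v;Q)\simeq h\,K(t/h,\hat v;\hat Q)$ and duality for (iii). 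What your route buys is transparency about where the $s$-dependence comes from (which the paper outsources to the references); what the paper's route buys is brevity and the guarantee, via the cited lemmas, that the constants are indeed uniform in the stated ranges.

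Two minor points you should repair, though neither is fatal. First, in (i) you claim to bound $\int_R v^2\dist(\cdot,\partial R)^{-2s}$ by the semi-norm $|v|^2_{H^s(R)}$ alone; on a bounded domain this is false (take $v\equiv 1$: the left side is $\simeq(1-2s)^{-1}$, the right side vanishes), so the Hardy inequality must be used with the full $H^s(R)$-norm on the right --- which is all the lemma requires, and which the localization by partition of unity produces anyway. Second, in (iii) for bilinear non-parallelogram elements the Jacobian determinant is not constant, so in the $L^2$ identity, in the $K$-functional comparison and in the duality pairing you need the (routine) extra observation that multiplication by $|\det DM_Q|/h^2$ and its reciprocal is bounded on the spaces involved, uniformly over shape-regular elements; the paper glosses over exactly this point with its ``follow similarly,'' so you are in good company, but a complete write-up should say it.
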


\begin{proof}
Assertion (i) is shown by \cite[Lemma~5]{Heuer_01_ApS} and
(ii) is \cite[Lemma 4.3]{GaticaHH_09_BLM}. Equivalence \eqref{scale2} follows
from the definition of the semi-norm.
For \eqref{scale1} and \eqref{scale3} in the case of a reference square and
affine maps see \cite[Lemma~2]{Heuer_01_ApS}.
The cases of triangular and quadrilateral non-parallelogram elements follow similarly.
\end{proof}

\begin{lemma} \label{la_DD}
(K-method)
For any $s\in[-1,1]$ there holds
\be \label{DD1}
   \sum_{Q\in\CQ_h} \|v\|_{H^s(Q)}^2 \lesssim \|v\|_{H^s(\G)}^2
   \qquad \forall v\in H^s(\G),
\ee
\be \label{DD2}
   \|v\|_{H^s(\G)}^2 \lesssim \sum_{j} \|v\|_{\tilde H^s(\G_j)}^2
   \quad\forall v\in H^s(\G):\ v|_{\G_j}\in \tilde H^s(\G_j)\;\forall j.
\ee
\end{lemma}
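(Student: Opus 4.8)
The plan is to establish the two domain-decomposition estimates \eqref{DD1} and \eqref{DD2} by first treating the endpoint cases $s\in\{-1,0,1\}$ by elementary means and then filling in the intermediate range via the K-method interpolation, exploiting the fact that the spaces $H^s(\G)$, $\tilde H^s(\G_j)$, and the ``broken'' spaces $\bigl(\sum_Q\|\cdot\|_{H^s(Q)}^2\bigr)^{1/2}$ are all obtained as interpolation spaces between their own endpoints.

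For \eqref{DD1}: at $s=0$ both sides equal $\|v\|_{L^2(\G)}^2$ (up to the obvious partition of $\G$ into elements), so there the inequality is in fact an equality. At $s=1$ the restriction operator $v\mapsto v|_Q$ is bounded from $H^1(\G)$ to $H^1(Q)$ with norm $1$, and summing over the finitely many (well, $h$-dependent but disjoint) elements gives $\sum_Q\|v\|_{H^1(Q)}^2\le\|v\|_{H^1(\G)}^2$ since the $L^2$ and gradient contributions are additive over a partition. At $s=-1$ one argues by duality: $\tilde H^{-1}(Q)$ is the dual of $H^1(Q)$, and a functional on $H^1(\G)$ restricts to each $H^1(Q)$, with the squared dual norms summing to at most the squared global dual norm (again because the $H^1(Q)$-norms are ``additive'' in the sense just used, so an $\ell^2$-combination of local functionals of total mass $\le\|F\|^2$ is dominated by the global one). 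Once these three endpoints are in hand, I would interpolate: for $s\in(0,1)$ write $H^s(\G)=(L^2(\G),H^1(\G))_{s,2}$ and note that the map ``restriction to all elements simultaneously'' is bounded $L^2(\G)\to\ell^2(Q;L^2(Q))$ and $H^1(\G)\to\ell^2(Q;H^1(Q))$, hence bounded on the interpolation space into $\ell^2(Q;H^s(Q))$; here I must use that the $\ell^2$-sum of interpolation spaces is the interpolation space of the $\ell^2$-sums (a standard property of the K-functional for $\ell^2$-valued families), together with Lemma~\ref{la_norms}(iii) only implicitly — actually for the reference-configuration-independent statement it suffices that on each $Q$ the K-method norm is the one in play. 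The range $s\in(-1,0)$ is handled the same way interpolating between the $s=-1$ and $s=0$ endpoints.

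For \eqref{DD2}: the point is that $\tilde H^s(\G_j)$-functions, extended by zero, patch together into $H^s(\G)$ when $s\le 1$. At $s=0$ this is again an equality. At $s=1$, a function that lies in $H_0^1(\G_j)$ on each face (extended by $0$) has a well-defined global gradient in $L^2(\G)$ with no distributional contributions along the face-edges precisely because the traces vanish, so $\|v\|_{H^1(\G)}^2\le\sum_j\|v\|_{H_0^1(\G_j)}^2$; note $\tilde H^1(\G_j)=H_0^1(\G_j)$. At $s=-1$, by duality $\|v\|_{H^{-1}(\G)}=\sup\{\<v,w\>_\G/\|w\|_{H^1(\G)}\}$; writing $\<v,w\>_\G=\sum_j\<v,w|_{\G_j}\>_{\G_j}$ and using $\|w|_{\G_j}\|_{H^1(\G_j)}$ summing to $\le\|w\|_{H^1(\G)}$ from the first endpoint above, Cauchy--Schwarz gives $\|v\|_{H^{-1}(\G)}^2\le\sum_j\|v\|_{\tilde H^{-1}(\G_j)}^2$, where $\tilde H^{-1}(\G_j)$ is the dual of $H^1(\G_j)$. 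Then interpolation exactly as before, with the zero-extension operator playing the role of the restriction operator, closes the intermediate ranges; one again invokes that $(\ell^2\text{-sum})$ of interpolation spaces interpolates.

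The main obstacle I anticipate is not any single estimate but the interpolation bookkeeping: one must be careful that the $s=1$ and $s=-1$ endpoints for \eqref{DD2} genuinely involve $\tilde H^1(\G_j)=H_0^1(\G_j)$ and its dual (so that the zero-extension is continuous and edge-jump contributions are killed), and that the retraction/coretraction pair (restriction for \eqref{DD1}, zero-extension for \eqref{DD2}) is bounded at \emph{both} endpoints with constants independent of the mesh — the mesh-independence is exactly what the ``additivity over a partition'' of the $H^0$ and $H^1$ norms buys us, and it is the reason the generic constant in $\lesssim$ can be taken uniform in $h$. A secondary subtlety is that \eqref{DD1} is asserted for $s$ up to $1$ on the \emph{closed} surface $\G$, so there is no issue with $\tilde H$ versus $H$ there, whereas \eqref{DD2} inherently needs the tilde spaces on each face; keeping the two statements' function spaces straight is where an incautious proof would slip.
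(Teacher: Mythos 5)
Your overall route---prove the endpoint cases $s\in\{-1,0,1\}$ and then interpolate, using that the K-functional of an $\ell^2$-sum is equivalent to the $\ell^2$-sum of the K-functionals---is the standard one, and it is essentially what the references the paper itself cites for this lemma (von Petersdorff; Ainsworth--McLean--Tran) carry out; the paper gives no proof beyond that citation. However, your endpoint $s=-1$ for \eqref{DD1} is genuinely wrong. In the paper's convention $\|v\|_{H^{-1}(Q)}$ is the norm dual to $\tilde H^{1}(Q)$, not the norm dual to $H^{1}(Q)$; you argue with the latter pairing (i.e.\ with $\tilde H^{-1}(Q)$), and with that pairing the asserted inequality $\sum_Q\|v\|_{\tilde H^{-1}(Q)}^2\lesssim\|v\|_{H^{-1}(\G)}^2$ is false: take $v=\pm1$ in a checkerboard pattern on $\CQ_h$; testing with $w\equiv 1$ gives $\|v\|_{\tilde H^{-1}(Q)}\gtrsim|Q|^{1/2}\simeq h$, so the left-hand side is $\gtrsim 1$, while $\|v\|_{H^{-1}(\G)}^2\simeq h^2$. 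Moreover, the phrase ``a functional on $H^1(\G)$ restricts to each $H^1(Q)$'' is not canonical---it needs an extension operator $H^1(Q)\to H^1(\G)$---and the claimed summability of the local dual norms does not follow from additivity of the $H^1$-norms.

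The correct endpoint argument is dual to the patching you use for \eqref{DD2} at $s=1$: for each $Q$ choose a near-maximizer $w_Q\in\tilde H^1(Q)$ with $\|w_Q\|_{\tilde H^1(Q)}=1$ and $\<v,w_Q\>_Q\ge\tfrac12\|v\|_{H^{-1}(Q)}$, extend by zero to $w_Q^0$ and set $w:=\sum_Q\|v\|_{H^{-1}(Q)}\,w_Q^0$. The supports are disjoint, so $\|w\|_{H^1(\G)}^2=\sum_Q\|v\|_{H^{-1}(Q)}^2$, and hence $\tfrac12\sum_Q\|v\|_{H^{-1}(Q)}^2\le\<v,w\>_\G\le\|v\|_{H^{-1}(\G)}\bigl(\sum_Q\|v\|_{H^{-1}(Q)}^2\bigr)^{1/2}$, which is exactly the $s=-1$ endpoint of \eqref{DD1} with a mesh-independent constant. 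With that repaired, your interpolation step must still identify, uniformly in $Q$ (hence in $h$), the duality-defined norms $H^{s}(Q)$ for $-1<s<0$ with the K-interpolation norms of the couple $(H^{-1}(Q),L^2(Q))$; this duality-versus-interpolation bookkeeping (duality theorem for the K-method plus scaling to the reference element, cf.\ Lemma~\ref{la_norms}) is precisely what the cited works supply and should not be left implicit. Your treatment of \eqref{DD2}, including its $s=-1$ endpoint, is sound, since there the tilde/non-tilde pairing is the right way around and the number of faces is fixed.
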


\begin{proof}
For complex interpolation these estimates have been analyzed by
von Petersdorff \cite{vP}; for the K-method see \cite{AinsworthMT_99_CBE},
cf. also \cite{Heuer_01_ApS}.
\end{proof}

\begin{lemma} \label{la_curl}
(i)
\be \label{curl_closed}
   \|\bcurlS{\hat Q} v\|_{H^{-1/2}(\hat Q)}
   \gtrsim
   |v|_{H^{1/2}(\hat Q)}
   \quad\forall v\in H^{1/2}(\hat Q)
\ee
(ii)
\be \label{curl_cont_G}
   \bcurlG:\; H^{1/2}(\G) \to \Bigl(H^{-1/2}(\G)\Bigr)^3
\ee
(iii) (K-method) For $s\in [0,1]$ there holds
\be \label{curl_cont_face_tilde}
   \bcurlS{\G_j}:\; \tilde H^s(\G_j) \to \Bigl(\tilde H^{s-1}(\G_j)\Bigr)^2,
   \quad j=1,\ldots,L,
\ee
and
\be \label{curl_cont_face_nontilde}
   \bcurlS{\G_j}:\; H^s(\G_j) \to \Bigl(H^{s-1}(\G_j)\Bigr)^2,
   \quad j=1,\ldots,L.
\ee
%The continuity is uniform in $s$ if $s\in [\delta,1-\delta]$ for fixed $\delta\in (0,1)$.
\end{lemma}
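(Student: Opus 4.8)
The plan is to prove the four statements in turn, starting from the well-known mapping properties of the surface curl and using the scaling and duality tools already assembled in Lemma~\ref{la_norms} and Lemma~\ref{la_DD}.

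For part (i), I would exploit the relation \eqref{WV} at the level of the reference element. The operator $\bcurlS{\hat Q}$ is the rotated gradient, so for $v\in H^{1/2}(\hat Q)$ one has $|v|_{H^{1/2}(\hat Q)}\simeq |\nabla v|_{H^{-1/2}(\hat Q)}= \|\bcurlS{\hat Q}v\|_{H^{-1/2}(\hat Q)}$ up to the usual identification. More carefully: since rotation by $\pi/2$ is an isometry on $(H^{-1/2}(\hat Q))^2$, it suffices to show $|v|_{H^{1/2}(\hat Q)}\lesssim \|\nabla v\|_{H^{-1/2}(\hat Q)}$, and this is a standard consequence of the fact that $\nabla\colon H^{1/2}(\hat Q)/\R \to (H^{-1/2}(\hat Q))^2$ has closed range (equivalently, a generalized Poincar\'e--Ne\v cas inequality on the Lipschitz domain $\hat Q$). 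One may also invoke the ellipticity of the single-layer operator $V$ together with \eqref{WV}: from \eqref{V_posdef} and the representation of $W$ via $\bcurlS{\hat Q}$ one reads off exactly that $\|\bcurlS{\hat Q}v\|_{H^{-1/2}}^2\gtrsim\langle V\bcurlS{\hat Q}v,\bcurlS{\hat Q}v\rangle = \langle Wv,v\rangle \gtrsim |v|_{H^{1/2}}^2$, where the last inequality is the $H^{1/2}$-ellipticity of the hypersingular operator modulo constants. Either route gives \eqref{curl_closed}.

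Parts (ii) and (iii) are continuity statements. For \eqref{curl_cont_G} I would note that $\bcurlG$ is the composition of $\gradS{\G}\colon H^{1/2}(\G)\to (H^{-1/2}(\G))^3$ (the surface gradient, bounded by definition of the tangential Sobolev spaces on the Lipschitz surface $\G$) with a pointwise rotation by the normal $\bn$; on each face this rotation is a fixed orthogonal transformation, hence bounded on $(H^{-1/2}(\G))^3$. For \eqref{curl_cont_face_tilde} and \eqref{curl_cont_face_nontilde}, on a single face $\G_j$ identified with a polygon in $\R^2$, the operator $\bcurlS{\G_j}$ is (after the isometric rotation) just the planar gradient. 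At the integer endpoints $s=1$ and $s=0$ the bounds $\gradS{}\colon H^1\to (L^2)^2$ and $\gradS{}\colon L^2\to (H^{-1})^2$ (resp.\ $\tilde H^1\to (L^2)^2$ and $L^2\to (\tilde H^{-1})^2$, using that $C_0^\infty$ is dense and the adjoint of $\grad$ on $\tilde H^1$ is $-\div$ on $L^2$) are elementary; the intermediate cases $s\in(0,1)$ then follow by the K-method interpolation identities recalled just before Remark~\ref{rem_norms}, namely $H^s=(L^2,H^1)_{s,2}$, $\tilde H^s=(L^2,H^1_0)_{s,2}$ and $\tilde H^{s-1}=(\tilde H^{-1},L^2)_{s,2}$, together with the fact that a bounded linear operator mapping both endpoint pairs is bounded on the interpolation spaces.

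I expect the only genuinely delicate point to be part (i): the inequality \eqref{curl_closed} is a \emph{lower} bound and therefore amounts to surjectivity/closed-range of $\nabla$ on the quotient by constants on the fixed Lipschitz domain $\hat Q$, which is where one actually uses that $\hat Q$ is a Lipschitz domain and not merely measurable. The cleanest self-contained argument is the one via \eqref{V_posdef} and \eqref{WV} specialized to $\hat Q$ (so that the hypersingular-operator ellipticity does the work), and I would present it that way; the remaining parts are routine combinations of endpoint bounds with the interpolation identities already stated, so no separate obstacle arises there.
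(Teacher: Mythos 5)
Your treatment of (iii) is sound and is essentially the paper's proof: both arguments interpolate between the endpoints $s=0$ and $s=1$; where the paper obtains the endpoint $\bcurlS{\G_j}\colon L^2(\G_j)\to(\tilde H^{-1}(\G_j))^2$ by zero extension, Fourier analysis on $\R^2$ and a bounded extension operator $H^1(\G_j)\to H^1(\R^2)$, you get it by the (equivalent, slightly more direct) adjoint/duality bound $\langle\bcurlS{\G_j}v,\varphi\rangle=\langle v,\curlS{\G_j}\varphi\rangle\lesssim\|v\|_{L^2(\G_j)}\|\varphi\|_{H^1(\G_j)}$, and the non-tilde case follows from the same endpoints as in the paper. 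The problems are in (i) and (ii). For (i), the route you explicitly say you would present --- via \eqref{V_posdef} and \eqref{WV} ``specialized to $\hat Q$'' --- does not work, because $\hat Q$ is an open flat screen: there $V$ is continuous and elliptic only with respect to the tilde scale ($V\colon\tilde H^{-1/2}(\hat Q)\to H^{1/2}(\hat Q)$, cf.\ \eqref{V_cont_open}), so $\langle V\bcurlS{\hat Q}v,\bcurlS{\hat Q}v\rangle$ is neither well defined nor bounded by $\|\bcurlS{\hat Q}v\|^2_{H^{-1/2}(\hat Q)}$ for general $v\in H^{1/2}(\hat Q)$ (note $\|\cdot\|_{H^{-1/2}(\hat Q)}\lesssim\|\cdot\|_{\tilde H^{-1/2}(\hat Q)}$, i.e.\ the comparison runs in the wrong direction for your chain); moreover the identity $\langle V\bcurlS{\hat Q}v,\bcurlS{\hat Q}v\rangle=\langle Wv,v\rangle$ requires zero boundary trace (or a closed surface) --- otherwise integration by parts produces boundary terms, cf.\ \eqref{IP} --- and the screen hypersingular operator is elliptic on $\tilde H^{1/2}(\hat Q)$, not ``modulo constants'' on $H^{1/2}(\hat Q)$. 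Your alternative route (a Ne\v{c}as/closed-range inequality for the gradient in the $H^{1/2}$--$H^{-1/2}$ scale) is the right idea --- \eqref{curl_closed} is precisely such an inequality, and the paper simply cites \cite[Lemma 4.1]{GaticaHH_09_BLM} for it --- but lower bounds of this kind do not follow by interpolation, so leaving it at ``standard'' without a reference or argument is a gap.

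For (ii), your factorization of $\bcurlG$ as the surface gradient followed by ``a pointwise rotation by $\bn$, bounded on $(H^{-1/2}(\G))^3$'' fails on a polyhedral surface: $\bn$ is discontinuous across the edges of $\G$, and multiplication by a piecewise constant field is not bounded on $H^{1/2}(\G)$, hence by duality not on $H^{-1/2}(\G)$ either; the facewise boundedness of the rotation does not patch together in these fractional norms. This is exactly the delicacy for which the paper refers to \cite{BuffaCS_02_THL}. If you want an elementary argument instead of the citation, a correct one is by duality: for $\psi\in(H^{1/2}(\G))^3$ choose a bounded extension $\Psi\in(H^1)^3$ of $\psi$ to the interior domain and use $\langle\bcurlG v,\psi\rangle_\G=\langle v,\bn\cdot(\nabla\times\Psi)|_\G\rangle_\G$, where $\nabla\times\Psi$ is divergence free so that its normal trace is controlled in $H^{-1/2}(\G)$ by $\|\Psi\|_{H^1}\lesssim\|\psi\|_{H^{1/2}(\G)}$; the rotation-by-$\bn$ shortcut as you state it is not a proof.
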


\begin{proof}
(i) is \cite[Lemma 4.1]{GaticaHH_09_BLM} and for (ii) we refer to \cite{BuffaCS_02_THL}.
In \eqref{curl_cont_G} we simply put a norm for vector functions with three components.
More precisely, $\bcurlG$ maps onto a vector space of tangential fields.
For details we refer to \cite{BuffaCS_02_THL}.
On individual faces, where $\G_j$ is identified with a subset of $\R^2$,
this tangential space is simply a vector space of functions with two components.
In order to show \eqref{curl_cont_face_tilde} we recall that $\G_j$ is being identified
with a subset of $\R^2$ and, thus, we can extend $v\in L^2(\G_j)$ by $0$ to
$v^0\in L^2(\R^2)$. By Fourier analysis,
$\bcurlS{\fR^2}:\; L^2(\R^2)\to \bigl(H^{-1}(\R^2)\bigr)^2$,
so that by the density of smooth functions with compact support in $L^2(\G_j)$
\be \label{pf_la_curl_1}
   \|\bcurlS{\G_j} v\|_{\tilde H^{-1}(\G_j)}
   \lesssim
   \|\bcurlS{\fR^2} v^0\|_{H^{-1}(\fR^2)}
   \lesssim
   \|v^0\|_{L^2(\fR)}
   =
   \|v\|_{L^2(\G_j)}.
\ee
Here, the estimate $\|\psi\|_{\tilde H^{-1}(\G_j)}\lesssim \|\psi^0\|_{H^{-1}(\fR^2)}$
for any $\psi\in C_0^\infty(\G_j)$ with zero extension $\psi^0$ follows from
the existence of a bounded extension operator
\(
   E_1:\; H^1(\G_j)\to H^1(\R^2):
\)
\begin{align*}
   \|\psi\|_{\tilde H^{-1}(\G_j)}
   &=
   \sup_{\varphi\in H^1(\G_j)}
   \frac {\<\psi,\varphi\>_{\G_j}}{\|\varphi\|_{H^1(\G_j)}}
   =
   \sup_{\varphi\in H^1(\G_j)}
   \frac {\<\psi^0,E_1\varphi\>_{\fR^2}}
         {\|\varphi\|_{H^1(\G_j)}}
   \\
   &\lesssim
   \sup_{\varphi\in H^1(\G_j)}
   \frac {\<\psi^0,E_1\varphi\>_{\fR^2}}
         {\|E_1\varphi\|_{H^1(\fR^2)}}
   \le
   \sup_{\varphi\in H^1(\fR^2)}
   \frac {\<\psi^0,\varphi\>_{\fR^2}}
         {\|\varphi\|_{H^1(\fR^2)}}
   =
   \|\psi^0\|_{H^{-1}(\fR^2)}.
\end{align*}
Since also $\bcurlS{\G_j}:\; H^1_0(\G_j)\to \bigl(L^2(\G_j)\bigr)^2$, the assertion
\eqref{curl_cont_face_tilde}
%for $\tilde H^s(\G_j)$ and $\tilde H^{s-1}(\G_j)$ with interpolation norms
follows by interpolation.
The boundedness \eqref{curl_cont_face_nontilde} is proved by interpolation
between
\(
   \bcurlS{\G_j}:\; H^1(\G_j)\to \bigl(L^2(\G_j)\bigr)^2
\)
and
\(
   \bcurlS{\G_j}:\; L^2(\G_j)\to \bigl(H^{-1}(\G_j)\bigr)^2.
\)
The latter boundedness follows from \eqref{pf_la_curl_1} by noting that
\[
   \|\bcurlS{\G_j} v\|_{H^{-1}(\G_j)}
   \lesssim
   \|\bcurlS{\G_j} v\|_{\tilde H^{-1}(\G_j)}.
%   \|\bcurlS{\fR^2} v^0\|_{H^{-1}(\fR^2)}
\]
%Specifically, using extension by zero
%\(
%   (\cdot)^0:\; L^2(\G_j)\to L^2(\R^2)
%\)
%there holds for any $\psi\in C_0^\infty(\G_j)$
%\begin{align*}
%   \|\psi\|_{H^{-1}(\G_j)}
%   &=
%   \sup_{\varphi\in H^1_0(\G_j)}
%   \frac {\<\psi,\varphi\>_{\G_j}}{\|\varphi\|_{H^1_0(\G_j)}}
%   =
%   \sup_{\varphi\in H^1_0(\G_j)}
%   \frac {\<\psi^0,\varphi^0\>_{\fR^2}}
%         {\|\varphi\|_{H^1_0(\G_j)}}
%   \\
%   &\lesssim
%   \sup_{\varphi\in H^1_0(\G_j)}
%   \frac {\<\psi^0,\varphi^0\>_{\fR^2}}
%         {\|\varphi^0\|_{H^1(\fR^2)}}
%   \le
%   \sup_{\varphi\in H^1(\fR^2)}
%   \frac {\<\psi^0,\varphi\>_{\fR^2}}
%         {\|\varphi\|_{H^1(\fR^2)}}
%   =
%   \|\psi^0\|_{H^{-1}(\fR^2)}.
%\end{align*}
%\cred{Check carefully proof of \eqref{curl_cont_face_tilde} and
%\eqref{curl_cont_face_nontilde}. Actually, \eqref{curl_cont_face_tilde} is only
%needed for $s=-1$ and \eqref{curl_cont_face_nontilde} for $s > 1/2$.}
%By the uniformity of the equivalence of
%interpolation norm for $\tilde H^{s-1}(\G_j)$ and the norm defined by duality
%(cf.~Remark~\ref{rem_norms}) .......
\end{proof}

\begin{proposition}[consistency] \label{prop_con}
Let $f\in L^2(\G)$ be given.  The DG BEM formulation is consistent, i.e.
the solution $u$ of \eqref{weak_org} solves
\[
   \<u,1\>_\G=0\qquad\mbox{and}\qquad
   a_h(u,v) = \<f, v\>_\G \qquad\forall v\in X_{hp}^0.
\]
\end{proposition}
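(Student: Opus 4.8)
The plan is to verify that $u$, which by assumption satisfies $\<u,1\>_\G=0$, also satisfies $a_h(u,v)=\<f,v\>_\G$ for every $v\in X_{hp}^0$, by expanding the bilinear form $a_h$ against the smooth exact solution. The first observation is that $u\in H^{1/2}(\G)$ is globally continuous (it lies in $H^r(\G)$, $r>1/2$, but more to the point it is the true solution), so its jumps $[u]|_e$ vanish on every edge $e\in\CE_h$. Consequently the last two terms in \eqref{bil}, namely $-\<[u],Tv\>_{\gamma_h}$ and $\nu\<[u],[v]\>_{\gamma_h}$, are identically zero. It therefore remains to show that
\[
   \<V\bcurl_h u,\bcurl_h v\>_\G + \<Tu,[v]\>_{\gamma_h} = \<f,v\>_\G
   \qquad\forall v\in X_{hp}^0.
\]

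Next I would reduce this to the integration-by-parts formula \eqref{IP}. Since $u$ is continuous across edges, $\bcurl_h u = \bcurlG u$ a.e.\ on $\G$, so $V\bcurl_h u = V\bcurlG u\in H^1(\G)$ and $Tu$ is the edgewise tangential trace of this single $H^1(\G)$ function. The idea is to sum \eqref{IP} over all elements $Q\in\CQ_h$ with test function $w=v|_Q\in P_{p(Q)}(Q)\subset H^1(Q)$. The left-hand side sums to $\sum_Q\<Wu,v\>_Q = \<Wu,v\>_\G = \<f,v\>_\G$, using the relation $\<Wu,v\>_\G=\<\bcurlG u,V\bcurlG v\>_\G$ only implicitly — in fact directly $Wu=f$ by \eqref{weak_org} (note $f\in L^2(\G)$ so the local forms $\<Wu,v\>_Q$ are well defined and the hypothesis $Wu\in L^2(\G)$ of \eqref{T_local} holds). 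The first term on the right of \eqref{IP} sums to $\sum_Q\<\bcurlS{Q}v,V\bcurlG u\>_Q = \<\bcurl_h v, V\bcurl_h u\>_\G$, which is the volume term we want. So it only remains to identify the boundary contributions: $\sum_Q\<\bt_Q\cdot V\bcurlG u,v\>_{\partial Q}$ must equal $\<Tu,[v]\>_{\gamma_h}$.

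The main obstacle, and the step deserving the most care, is precisely this reorganization of the edge sum. Each interior edge $e=Q_1\cap Q_2$ appears in $\partial Q_1$ and in $\partial Q_2$ with \emph{opposite} orientations, so $\bt_{Q_1}|_e = -\bt_{Q_2}|_e$; since $V\bcurlG u$ is single-valued on $e$ (being the trace of an $H^1(\G)$ function), the two contributions combine to $\<\bt_{Q_1}\cdot V\bcurlG u,\,v|_{Q_1}-v|_{Q_2}\>_e = \<(Tu)|_e,[v]|_e\>_e$, using the definitions \eqref{t_e} of $\bt_e$, \eqref{jump} of the jump, and \eqref{T} of the operator $T$ (here $Tu$ is computed from $\bcurl_h u=\bcurlG u$). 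One must check the orientation/numbering conventions are consistent: the association $e\mapsto Q_1$ fixing $\bt_e$ in \eqref{t_e} is the same one fixing the sign of the jump in \eqref{jump}, so the product $\<(Tu)|_e,[v]|_e\>_e$ is independent of that arbitrary choice, as it must be. On a closed surface $\G$ there are no boundary edges, so every edge is interior and the sum over $\partial Q$ reduces cleanly to a sum over $\CE_h$ of these paired terms, giving $\<Tu,[v]\>_{\gamma_h}$. Assembling the three pieces yields $a_h(u,v)=\<\bcurl_h v,V\bcurl_h u\>_\G+\<Tu,[v]\>_{\gamma_h}=\<f,v\>_\G$, completing the proof.
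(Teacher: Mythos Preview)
Your proof is correct and follows essentially the same route as the paper: use $[u]=0$ to drop two terms of $a_h$, identify $\bcurl_h u=\bcurlG u$, and apply the elementwise integration-by-parts formula \eqref{IP}, reorganizing the boundary sum $\sum_Q\<\bt_Q\cdot V\bcurlG u,v\>_{\partial Q}$ into $\<Tu,[v]\>_{\gamma_h}$ via the orientation conventions \eqref{jump}, \eqref{t_e}. Your treatment of the edge bookkeeping is in fact more explicit than the paper's, which simply states the reorganization and invokes \eqref{IP}.
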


\begin{proof}
By definition $u$ has integral mean zero.
It is well known that, for the given geometry and $f\in L^2(\G)$, $u\in H^1(\G)$.
Therefore $u$ is continuous in the sense of traces and $\bcurl_h u=\bcurlG u$ on $\G$,
and $V\bcurlG u$ is continuous in the sense of traces, as well.
Hence, recalling the signs in the definitions of $T$ \eqref{T}, the jump \eqref{jump}
and $\bt_e$ \eqref{t_e},
\begin{align*}
  a_h(u,v)
  &=
  \<V \bcurlG u, \bcurl_h v\>_\G + \<Tu, [v]\>_{\gamma_h}\\
  &=
  \sum_{Q\in\CQ_h} \<V\bcurlG u, \bcurlS{Q} v\>_Q
  +
  \sum_{e\in\CE_h} \<Tu, [v]\>_e\\
  &=
  \sum_{Q\in\CQ_h} \<V\bcurlG u, \bcurlS{Q} v\>_Q
  +
  \sum_{Q\in\CQ_h} \<\bt_Q\cdot V \bcurlG u, v\>_{\partial Q}.
\end{align*}
Integration by parts on each element proves the assertion, cf.~\eqref{IP}.
\end{proof}

\begin{proposition}[discrete ellipticity] \label{prop_ell}
There exists a constant $C>0$ independent of $\nu$ and $h$ such that
\[
   a_h(v,v)
   \gtrsim
   \Bigl( \|\bcurl_h v\|_{H^{-1/2}(\G)}
           +
           \sqrt{\nu} \|[v]\|_{L^2(\gamma_h)}
   \Bigr)
   \|v\|_{H^{1/2}_\nu(\CQ_h)}
   \quad\forall v\in X_{hp}^0.
\]
\end{proposition}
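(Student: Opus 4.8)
The plan is to start from the definition \eqref{bil} of $a_h$, evaluated on the diagonal: for $v\in X_{hp}^0$ the two middle terms $\<Tv,[v]\>_{\gamma_h}$ and $-\<[v],Tv\>_{\gamma_h}$ cancel, so $a_h(v,v)=\<V\bcurl_h v,\bcurl_h v\>_\G + \nu\|[v]\|^2_{L^2(\gamma_h)}$. The weakly singular operator $V$ acts on the (vector-valued, $L^2$) field $\bcurl_h v\in (L^2(\G))^3$, so by the ellipticity \eqref{V_posdef} (applied componentwise, or rather to the tangential field) one gets $\<V\bcurl_h v,\bcurl_h v\>_\G \gtrsim \|\bcurl_h v\|^2_{H^{-1/2}(\G)}$. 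Hence
\[
   a_h(v,v) \gtrsim \|\bcurl_h v\|^2_{H^{-1/2}(\G)} + \nu\|[v]\|^2_{L^2(\gamma_h)}
   \simeq \Bigl(\|\bcurl_h v\|_{H^{-1/2}(\G)} + \sqrt{\nu}\,\|[v]\|_{L^2(\gamma_h)}\Bigr)^2,
\]
using that $a^2+b^2 \simeq (a+b)^2$ for $a,b\ge 0$. So it remains to show that this lower bound controls the full norm, i.e.
\[
   \|\bcurl_h v\|_{H^{-1/2}(\G)} + \sqrt{\nu}\,\|[v]\|_{L^2(\gamma_h)}
   \gtrsim \|v\|_{H^{1/2}_\nu(\CQ_h)},
\]
which, recalling the definition of $\|\cdot\|_{H^{1/2}_\nu(\CQ_h)}$, amounts to bounding $|v|^2_{H^{1/2}(\CQ_h)}=\sum_Q |v|^2_{H^{1/2}(Q)}$ and $|\int_\G v|^2$ by the right-hand side (the jump term is already present on the left).

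The core estimate is therefore an element-wise one: $|v|_{H^{1/2}(Q)} \lesssim \|\bcurlS{Q} v\|_{H^{-1/2}(Q)}$ for each $Q$, together with summation. First I would pull back to the reference element $\hat Q$: by the scaling relations \eqref{scale2} (for the $H^{1/2}$ semi-norm) and \eqref{scale3} (for the $H^{-1/2}$ norm, using $s=-1/2$), both sides scale with the same power $h^{2-2\cdot(1/2)}=h$, so the inequality is scale-invariant and it suffices to prove it on $\hat Q$. On $\hat Q$ this is exactly Lemma~\ref{la_curl}(i), the inequality \eqref{curl_closed}: $\|\bcurlS{\hat Q}\hat v\|_{H^{-1/2}(\hat Q)} \gtrsim |\hat v|_{H^{1/2}(\hat Q)}$. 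Summing over $Q\in\CQ_h$ gives $\sum_Q |v|^2_{H^{1/2}(Q)} \lesssim \sum_Q \|\bcurlS{Q}v\|^2_{H^{-1/2}(Q)}$, and I still need to bound the right-hand side by $\|\bcurl_h v\|^2_{H^{-1/2}(\G)}$; this is the "local controls global in negative order" direction, which is \eqref{DD1} of Lemma~\ref{la_DD} applied to the components of $\bcurl_h v$ at order $s=-1/2$ (note $\bcurl_h v|_Q = \bcurlS{Q}v$ by definition of the broken curl, and $\bcurl_h v\in (L^2(\G))^3\subset (H^{-1/2}(\G))^3$).

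For the mean-value term, I would use that $v$ has a well-defined trace on $\gamma_h$ from each side (it is an elementwise polynomial, hence smooth on each $\bar Q$), pick any fixed element $Q_0$, and note $\int_\G v = \int_{Q_0} v + \sum (\text{integrals over the other elements})$; comparing $v$ across an edge costs a jump term $\|[v]\|_{L^2(\gamma_h)}$, while $\|v\|_{L^2(Q)}$ on a single element is controlled via a Poincaré-type inequality by $|v|_{H^{1/2}(Q)}$ up to its own mean — more cleanly, one can bound $|\int_\G v|$ directly by $\bigl(\sum_Q |v|_{H^{1/2}(Q)}^2\bigr)^{1/2} + \|[v]\|_{L^2(\gamma_h)}$ using that $\int_\G v$ is unchanged if $v$ is replaced by a global constant plus a piecewise-constant correction whose jumps are the $[v]$'s, and a constant function can only be adjusted across $\gamma_h$ at the cost of the jumps. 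Since $\sqrt\nu\ge$ a generic positive multiple is \emph{not} assumed, I must be careful that the $|\int_\G v|$ bound does not pick up a spurious $\nu$; writing $|\int_\G v| \lesssim |v|_{H^{1/2}(\CQ_h)} + \|[v]\|_{L^2(\gamma_h)} \lesssim |v|_{H^{1/2}(\CQ_h)} + \sqrt\nu\,\|[v]\|_{L^2(\gamma_h)}$ is valid only when $\nu\gtrsim 1$; for small $\nu$ one instead absorbs $\|[v]\|_{L^2(\gamma_h)}$ into $|v|_{H^{1/2}(\CQ_h)}$ via a trace inequality on each element (the jump of a function across an edge is controlled by its $H^{1/2}$ norms on the two adjacent elements) — so in all cases $|\int_\G v| + |v|_{H^{1/2}(\CQ_h)} \lesssim |v|_{H^{1/2}(\CQ_h)} + \sqrt\nu\,\|[v]\|_{L^2(\gamma_h)}$, uniformly in $\nu$.

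The main obstacle I expect is precisely this last point — obtaining the bound with a constant genuinely independent of $\nu$, including the regime $\nu\to 0$, and handling the interplay between the jump term and the elementwise $H^{1/2}$ semi-norms so that one does not circularly estimate $\|[v]\|_{L^2(\gamma_h)}$ by something containing $\sqrt\nu\|[v]\|_{L^2(\gamma_h)}$. The resolution is to treat the global mean-value constraint and the jumps together: since $v\in X_{hp}^0$ has zero global mean, a Poincaré/Friedrichs-type argument on the broken space shows $\|v - \bar v_{\mathrm{pw}}\|$ is controlled by the semi-norms and the remaining freedom (the piecewise constants with zero global mean) is controlled by the jumps; combined with the elementwise curl estimate this closes the proof. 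The geometric assumptions (conforming, quasi-uniform, shape-regular mesh) are what make all the implied constants uniform in $h$ and in the number of elements.
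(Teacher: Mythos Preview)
Your core strategy is exactly the paper's: observe that $a_h(v,v)=\<V\bcurl_h v,\bcurl_h v\>_\G+\nu\|[v]\|_{L^2(\gamma_h)}^2$, use the ellipticity \eqref{V_posdef} of $V$ to get $\|\bcurl_h v\|_{H^{-1/2}(\G)}^2$, then localize via \eqref{DD1} and scale to $\hat Q$ where \eqref{curl_closed} gives $|\hat v|_{H^{1/2}(\hat Q)}\lesssim\|\bcurlS{\hat Q}\hat v\|_{H^{-1/2}(\hat Q)}$, and scale back. This yields $a_h(v,v)\gtrsim\|\bcurl_h v\|_{H^{-1/2}(\G)}^2+|v|_{H^{1/2}(\CQ_h)}^2+\nu\|[v]\|_{L^2(\gamma_h)}^2$, and the product form follows since both factors in the claimed lower bound are separately dominated by $a_h(v,v)^{1/2}$.

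Your ``main obstacle'', however, is not an obstacle at all: for $v\in X_{hp}^0$ one has $\int_\G v=0$ by definition of the space, so the term $|\int_\G v|^2$ in $\|v\|_{H^{1/2}_\nu(\CQ_h)}^2$ simply vanishes. The entire discussion of Poincar\'e/Friedrichs arguments, piecewise constants, and absorbing $\|[v]\|_{L^2(\gamma_h)}$ into $|v|_{H^{1/2}(\CQ_h)}$ is unnecessary. Worse, the proposed workaround for small $\nu$ would actually fail: the trace map $H^{1/2}(Q)\to L^2(\partial Q)$ is \emph{not} bounded (this is precisely the critical exponent, cf.\ the factor $(s-1/2)^{-1/2}$ in \eqref{trace}), so you cannot control $\|[v]\|_{L^2(e)}$ by the $H^{1/2}$-seminorms on the adjacent elements. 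Once you drop that paragraph, your proof is complete and identical to the paper's.
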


\begin{proof}
Let $v\in X_{hp}^0$ be given. Combining \eqref{V_posdef} and \eqref{DD1} we find that
there holds
\begin{align}
   \<V \bcurl_h v , \bcurl_h v\>_\G
   &\gtrsim
   \|\bcurl_h v\|_{H^{-1/2}(\G)}^2
   \label{pf_ell_1}
   \\
   &\gtrsim
   \sum_{Q\in\CQ_h} \|\bcurlS{Q} v\|_{H^{-1/2}(Q)}^2.
   \label{pf_ell_2}
\end{align}
Transforming $Q$ forth and back onto the reference element $\hat Q$,
denoting the transformed function $v$ by $\hat v$, and
employing \eqref{scale3}, \eqref{curl_closed}, \eqref{scale2}, we obtain
\[
   \|\bcurlS{Q} v\|_{H^{-1/2}(Q)}^2
   \simeq
   h^3 \|h^{-1}\bcurlS{\hat Q} \hat v\|_{H^{-1/2}(\hat Q)}^2
   \gtrsim
   h |\hat v|_{H^{1/2}(\hat Q)}^2
   \simeq
   |v|_{H^{1/2}(Q)}^2.
\]
Together with \eqref{pf_ell_2} this estimate proves that
\be \label{pf_ell_3}
   \<V \bcurl_h v , \bcurl_h v\>_\G
   \gtrsim
   \sum_{Q\in\CQ_h} |v|_{H^{1/2}(Q)}^2
   =
   |v|_{H^{1/2}(\CQ_h)}^2.
\ee
Finally, noting that
\[
   a_h(v,v) = 
   \<V \bcurl_h v, \bcurl_h v\>_\G + \nu \<[v], [v]\>_{\gamma_h},
\]
a combination of \eqref{pf_ell_1} and \eqref{pf_ell_3} together with
the definition of the norm $\|\cdot\|_{H^{1/2}_\nu(\CQ_h)}$ finishes
the proof.
\end{proof}

%%%%%%%%%%%%%%%%%%%%%%%%%%%%%%%%%%%%%%%%%%%%%%%%%%%%%%%%%%%%%%%%%%%%%%%%%%%%%%%%
\subsection{Proof of the main theorem} \label{sec_proof}
By the discrete ellipticity of the bilinear form $a_h(\cdot,\cdot)$,
there exists a unique solution $u_{hp}$ to \eqref{DGBEM} and it remains
to prove the error estimate.
%To this end, note that the order $s$ of the involved Sobolev norms
%(defined by interpolation and by the Sobolov-Slobodeckij double integrals)
%is confined to $s>1/2$ and bounded away from $1$ so that
%involved norms are uniformly equivalent, cf. Remark~\ref{rem_norms}.

We start with the standard Strang-technique of introducing a discrete function $v$,
in this case $v\in X_{hp}^0\cap C^0(\G)$, and using the
triangle inequality:
\begin{align} \label{pf_main_-2}
   \|u-u_{hp}\|_{H^{1/2}_\nu(\CQ_h)}
   &\le
   \|u-v\|_{H^{1/2}_\nu(\CQ_h)}
   +
   \|u_{hp}-v\|_{H^{1/2}_\nu(\CQ_h)}.
\end{align}
Since $[u]=[v]=0$ on $\gamma_h$ and $\<u,1\>_\G=\<v,1\>_\G=0$ there holds
\be \label{pf_main_-1}
   \|u-v\|_{H^{1/2}_\nu(\CQ_h)}
   =
   |u-v|_{H^{1/2}(\CQ_h)} \lesssim |u-v|_{H^{1/2}(\G)}
   \le
   \|u-v\|_{H^{1/2}(\G)}.
\ee
Here we made use of the bound
\be \label{DD3}
   |v|_{H^s(\CQ_h)} \le C\,|v|_{H^s(\G)}
   \quad \forall v\in H^s(\G) \qquad (s\in(0,1])
\ee
with $s=1/2$. For the Sobolev-Slobodeckij semi-norm (employed in this instance
to formulate the theorem), this follows with $C=1$ from its definition.
For the norm defined by interpolation one uses \eqref{DD1}
and an argument from quotient spaces to reproduce the semi-norm on the right-hand side.

To bound the second term on the right-hand side of \eqref{pf_main_-2} we
use the discrete ellipticity of the bilinear form $a_h(\cdot,\cdot)$,
cf.~Proposition~\ref{prop_ell}.
We obtain for any $v\in X_{hp}^0\cap C^0(\G)$
\begin{align} \label{pf_main_0}
   \|u_{hp}-v\|_{H^{1/2}_\nu(\CQ_h)}
   &\lesssim
   \sup_{w\in X_{hp}^0}
   \frac {a_h(u_{hp}-v,w)}
         {\|\bcurl_h w\|_{H^{-1/2}(\G)}
          + \sqrt{\nu}\|[w]\|_{L^2(\gamma_h)}}.
\end{align}
By Proposition~\ref{prop_con}, the definition of $u_{hp}$ and the fact that
$[u]=[v]=0$ on $\gamma_h$, there holds for any $w\in X_{hp}^0$
\begin{align} \label{pf_main_1}
   a_h(u_{hp}-v,w)
   =
   a_h(u-v,w)
   &=
   \<V \bcurl_h (u-v), \bcurl_h w\>_\G
   + \<T(u-v), [w]\>_{\gamma_h}
   \nonumber\\&\quad
   - \<[u-v], Tw\>_{\gamma_h}
   + \nu \<[u-v], [w]\>_{\gamma_h}
   \nonumber\\
   &=
   \<V \bcurlG (u-v), \bcurl_h w\>_\G
   + \<T(u-v), [w]\>_{\gamma_h}.
\end{align}
The first term on the right-hand side is bounded due to the mapping properties
\eqref{V_cont} of $V$ and \eqref{curl_cont_G} of $\bcurlG$:
\begin{align} \label{pf_main_2}
   \<V \bcurlG (u-v), \bcurl_h w\>_\G
   &\lesssim
   \|V \bcurlG (u-v)\|_{H^{1/2}(\G)} \|\bcurl_h w\|_{H^{-1/2}(\G)}
   \nonumber\\
   &\lesssim
   \|u-v\|_{H^{1/2}(\G)} \|\bcurl_h w\|_{H^{-1/2}(\G)}.
\end{align}
To bound $T(u-v)$ we employ a trace argument.
Let us consider an element $Q\in\CQ_h$  with edge
$e$, and a function $\psi\in H^s(Q)$ with $s>1/2$. We map $Q$
onto a fixed reference element $\hat Q$ and denote the mapped edge by
$\hat e$. Accordingly we denote the transformed function $\psi$ by $\hat\psi$.
Then, by transformation, using the trace theorem \eqref{trace} and the scaling
properties \eqref{scale3} with $s=0$ and \eqref{scale2}, we find that
\[
   \|\psi\|_{L^2(e)}^2
   \simeq
   h \|\hat\psi\|_{L^2(\hat e)}^2
   \lesssim
   \frac h{s-1/2} \|\hat\psi\|_{H^s(\hat Q)}^2
   \simeq
   \frac h{s-1/2}
   \Bigl( h^{-2}\|\psi\|_{L^2(Q)}^2 + h^{2s-2}|\psi|_{H^s(Q)}^2\Bigr).
\]
Applying this bound to the vector case, and recalling the
definition \eqref{T} of $T$, yields (with Sobolev-Slobodeckij semi-norm)
\[
   \|T(u-v)\|_{L^2(e)}^2
   \lesssim
   \frac 1{s-1/2}
   \Bigl( h^{-1}\|V\bcurlG (u-v)\|_{L^2(Q)}^2
        + h^{2s-1}|V\bcurlG (u-v)|_{H^s(Q)}^2\Bigr)
\]
and, summing over $e\in\CE_h$,
\be \label{pf_main_3}
   \|T(u-v)\|_{L^2(\gamma_h)}^2
   \lesssim
   \frac 1{s-1/2}
   \Bigl( h^{-1}\|V\bcurlG (u-v)\|_{L^2(\G)}^2
        + h^{2s-1}|V\bcurlG (u-v)|_{H^s(\CQ_h)}^2\Bigr).
\ee
By the mapping properties \eqref{V_cont} of $V$, the norm estimate \eqref{DD2}
and the boundedness \eqref{curl_cont_face_tilde} of $\bcurlS{\G_j}$ ($j=1,\ldots,L$)
we have that
\begin{align} \label{pf_main_4}
   \|V\bcurlG (u-v)\|_{L^2(\G)}^2
   &\lesssim
   \|\bcurlG (u-v)\|_{H^{-1}(\G)}^2
   \lesssim
   \sum_{j=1}^L \|\bcurlS{\G_j} (u-v)\|_{\tilde H^{-1}(\G_j)}^2
   \nonumber
   \\
   &\lesssim
   \sum_{j=1}^L \|u-v\|_{L^2(\G_j)}^2
   =
   \|u-v\|_{L^2(\G)}^2.
\end{align}
Similarly, first employing \eqref{DD3}, then the mapping properties \eqref{V_cont}
of $V$, the decomposition of norms \eqref{DD2}, the norm equivalence \eqref{equiv},
the boundedness of the surface curl \eqref{curl_cont_face_nontilde}, and
\eqref{DD1}, we obtain
\begin{align} \label{pf_main_5}
   |V\bcurlG (u-v)|_{H^s(\CQ_h)}^2
   &\lesssim
   |V\bcurlG (u-v)|_{H^s(\G)}^2
   \lesssim
   \|\bcurlG (u-v)\|_{H^{s-1}(\G)}^2
   \nonumber
   \\
   &\lesssim
   \sum_{j=1}^L \|\bcurlS{\G_j}(u-v)\|_{\tilde H^{s-1}(\G_j)}^2
   \lesssim
   \frac 1{(s-1/2)^2}
   \sum_{j=1}^L \|\bcurlS{\G_j}(u-v)\|_{H^{s-1}(\G_j)}^2
   \nonumber
   \\
   &\lesssim
   \frac 1{(s-1/2)^2}
   \sum_{j=1}^L \|u-v\|_{H^{s}(\G_j)}^2
   \lesssim
   \frac 1{(s-1/2)^2} \|u-v\|_{H^{s}(\G)}^2.
\end{align}
Here, we used implicitly that the Sobolev-Slobodeckij norm (started with) is
uniformly equivalent to the interpolation norm for $s$ close to $1/2$ so
that the continuity \eqref{curl_cont_face_nontilde} of $\bcurlG$ with respect to the
interpolation norm is applicable. In the final term we can switch back to the
Sobolev-Slobodeckij norm.

Combining \eqref{pf_main_3}, \eqref{pf_main_4} and \eqref{pf_main_5},
we obtain
\be \label{pf_main_6}
   \frac 1{\sqrt{\nu}} \|T(u-v)\|_{L^2(\gamma_h)}
   \lesssim
   \frac 1{\sqrt{\nu(s-1/2)}}
   \Bigl(
         h^{-1/2} \|u-v\|_{L^2(\G)}
         +
         \frac {h^{s-1/2}}{s-1/2} \|u-v\|_{H^s(\G)}
   \Bigr).
\ee
Combination of \eqref{pf_main_0}, \eqref{pf_main_1}, \eqref{pf_main_2} and
\eqref{pf_main_6} proves
\[
   \|u_{hp}-v\|_{H^{1/2}_\nu(\CQ_h)}
   \lesssim
   \|u-v\|_{H^{1/2}(\G)}
   +
   \frac 1{\sqrt{\nu(s-1/2)}}
   \Bigl(
      h^{-1/2} \|u-v\|_{L^2(\G)}
      +
      \frac {h^{s-1/2}}{s-1/2} \|u-v\|_{H^s(\G)}
   \Bigr)
\]
for any $v\in X_{hp}^0\cap C^0(\G)$.
By \eqref{pf_main_-2} and \eqref{pf_main_-1} this concludes the proof.

%%%%%%%%%%%%%%%%%%%%%%%%%%%%%%%%%%%%%%%%%%%%%%%%%%%%%%%%%%%%%%%%%%%%%%%%%%%%%%%%
\subsection{The problem on an open surface} \label{sec_open}

In the case of an open polyhedral surface the previous analysis carries over with\
few changes. The energy space (where one looks for the solution $u$ of \eqref{IE})
then is $\tilde H^{1/2}(\G)$. Moreover, the hypersingular and weakly singular operators
$W$ and $V$ are elliptic on $\tilde H^{1/2}(\G)$ and $\tilde H^{-1/2}(\G)$, respectively,
and the continuity \eqref{V_cont} becomes
\be \label{V_cont_open}
   V:\; \tilde H^{s-1}(\G) \to H^{s}(\G), \quad 0 \le s \le 1.
\ee
Since in this case the hypersingular operator is invertible one does not ask for $u$
to have integral mean value zero. Instead, its trace on the boundary $\partial\G$ of
$\G$ vanishes. Accordingly, functions of the discrete space $X_{hp}$ must comply
with this homogeneous boundary condition. In the corresponding discrete bilinear form
$a_h(\cdot,\cdot)$, jumps on edges lying on $\partial\G$ become traces.

To be precise, let us recall the notation $\CE_h$ for the set of edges of $\CQ_h$.
We distinguish between edges $\CE_h^b$ which are subsets of the boundary $\partial\G$,
and the rest $\CE_h^i$,
\[
   \CE_h = \CE_h^b \cup \CE_h^i.
\]
For $e\in\CE_h^i$ with $e=Q_1\cap Q_2$, $Q_1,Q_2\in\CQ_h$,
\[
   [v]|_e := (v|_{Q_1}-v|_{Q_2})|_e,
   %\quad \{v\}|_e := \frac 12(v|_{Q_1}+v|_{Q_2})|_e
\]
as before, and for $e\in \CE_h^b$
\[
   %\{v\}|_e := v|_e,\quad
   [v]|_e := v|_e.
\]
Accordingly we define unit tangential vectors on edges,
\be \label{t_b}
   \bt_e:= \left\{\begin{array}{ll}
      \bt_{Q_1}|_e & \quad\mbox{if}\ e\in\CE_h^i,\\
      \bt|_e       & \quad\mbox{if}\ e\in\CE_h^b
   \end{array}\right.
\ee
with $\bt$ being the unit tangential vector along $\partial\G$. The discontinuous
Galerkin boundary element method then reads:
{\it Find $u_{hp} \in X_{hp}$ such that}
\[
   a_h(u_{hp},v) = \<f, v\>_\G \quad\forall v\in X_{hp},
\]
and, instead of Theorem~\ref{thm_cea}, there holds the error estimate
\begin{multline} \label{cea_open}
   \|u-u_{hp}\|_{H^{1/2}_\nu(\CQ_h)}
   \le C
   \\
   \inf_{v\in X_{hp}\cap H^1_0(\G)}
   \Bigl\{
      \|u-v\|_{\tilde H^{1/2}(\G)}
      +
      \frac 1{\nu^{1/2}(s-1/2)^{3/2}}
      \Bigl(
         h^{-1/2} \|u-v\|_{L^2(\G)}
         +
         h^{s-1/2} \|u-v\|_{\tilde H^s(\G)}
      \Bigr)
   \Bigr\}
\end{multline}
with $s\in(1/2,\min\{r,1-\delta\}]$ for $r<1$ (since $u\not\in H^1(\G)$ in this
case) and norm
\[
   \|v\|_{H^s_\nu(\CQ_h)}^2 :=
   |v|_{H^s(\CQ_h)}^2  + \nu \|[v]\|_{L^2(\gamma_h)}^2.
\]
Let us note the few changes which are necessary in the proofs.
\begin{enumerate}
\item Proposition~\ref{prop_con}.
      In the case of an open surface we have $u\in \tilde H^s(\G)$ for any $s<1$
      so that the jumps of $u$ on $\gamma_h$ including its trace on $\partial\G$ vanish,
      and $V\bcurlG u$ is also continuous. The rest of proving consistency is identical.
\item Proposition~\ref{prop_ell}.
      In \eqref{pf_ell_1} one replaces the $H^{-1/2}(\G)$ with the $\tilde H^{-1/2}(\G)$-norm
      and notes that the injection of $\tilde H^{-1/2}(\G)$ in $H^{-1/2}(\G)$ is continuous.
\item Proof of \eqref{cea_open}. The steps from Section~\ref{sec_proof} carry over by noting
      that $[u]=u=0$ on $\partial\G$ and $[v]=v=0$ on $\partial\G$ for any $v\in X_{hp}$,
      and by replacing some ``non-tilde'' norms with their ``tilde'' counterparts.
\end{enumerate}

\begin{theorem} \label{thm_cea_plane}
In the case of a single face the error estimate can be improved to
\begin{multline*}
   \|u-u_{hp}\|_{H^{1/2}_\nu(\CQ_h)}
   \le C
   \\
   \inf_{v\in X_{hp}\cap H^1_0(\G)}
   \Bigl\{
      \|u-v\|_{\tilde H^{1/2}(\G)}
      +
      \frac 1{\sqrt{\nu(s-1/2)}}
      \Bigl(
         h^{-1/2} \|u-v\|_{L^2(\G)}
         +
         h^{s-1/2} \|u-v\|_{\tilde H^s(\G)}
      \Bigr)
   \Bigr\}.
\end{multline*}
\end{theorem}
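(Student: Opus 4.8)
The plan is to repeat the argument of Section~\ref{sec_proof}, incorporating the open-surface modifications listed in Section~\ref{sec_open}, and to observe that the single-face geometry removes exactly the step that costs the extra power of $(s-1/2)^{-1}$. Unique solvability follows from the open-surface version of discrete ellipticity (Proposition~\ref{prop_ell} with $H^{-1/2}(\G)$ replaced by $\tilde H^{-1/2}(\G)$), and the Strang decomposition reduces, for any $v\in X_{hp}\cap H^1_0(\G)$, the estimate to bounding $\|u-v\|_{\tilde H^{1/2}_\nu(\CQ_h)}=|u-v|_{H^{1/2}(\CQ_h)}\lesssim\|u-v\|_{\tilde H^{1/2}(\G)}$ together with
\[
   \|u_{hp}-v\|_{H^{1/2}_\nu(\CQ_h)}
   \lesssim
   \sup_{w\in X_{hp}}
   \frac{a_h(u-v,w)}{\|\bcurl_h w\|_{H^{-1/2}(\G)}+\sqrt{\nu}\,\|[w]\|_{L^2(\gamma_h)}},
\]
where, by consistency (Proposition~\ref{prop_con}, open-surface form) and $[u]=[v]=0$ on $\gamma_h$ (including the trace on $\partial\G$),
\(
   a_h(u-v,w)=\<V\bcurlG(u-v),\bcurl_h w\>_\G+\<T(u-v),[w]\>_{\gamma_h}.
\)

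For the first term I would use the open-surface mapping properties directly on the single face $\G=\G_1$: by \eqref{curl_cont_face_tilde} with $s=1/2$, $\bcurlG:\tilde H^{1/2}(\G)\to(\tilde H^{-1/2}(\G))^2$, and by \eqref{V_cont_open} with $s=1/2$, $V:\tilde H^{-1/2}(\G)\to H^{1/2}(\G)$, whence
\(
   \<V\bcurlG(u-v),\bcurl_h w\>_\G\lesssim\|u-v\|_{\tilde H^{1/2}(\G)}\,\|\bcurl_h w\|_{H^{-1/2}(\G)}.
\)
The trace estimate \eqref{pf_main_3} carries over verbatim, giving
\[
   \|T(u-v)\|_{L^2(\gamma_h)}^2
   \lesssim
   \frac 1{s-1/2}\Bigl(h^{-1}\|V\bcurlG(u-v)\|_{L^2(\G)}^2+h^{2s-1}|V\bcurlG(u-v)|_{H^s(\CQ_h)}^2\Bigr).
\]
Here is the point where the single-face case differs: instead of splitting $\G$ into faces via \eqref{DD2} and then paying the norm equivalence \eqref{equiv} (whose constant is $\sim(s-1/2)^{-1}$ at order $s-1$), I apply $\bcurlS{\G}:\tilde H^{s}(\G)\to(\tilde H^{s-1}(\G))^2$ and $\bcurlS{\G}:L^2(\G)\to(\tilde H^{-1}(\G))^2$ (both instances of \eqref{curl_cont_face_tilde}, legitimate because $\G$ is a single face) followed by $V:\tilde H^{s-1}(\G)\to H^s(\G)$ and $V:\tilde H^{-1}(\G)\to L^2(\G)$ from \eqref{V_cont_open}, together with \eqref{DD3}. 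This yields
\(
   \|V\bcurlG(u-v)\|_{L^2(\G)}\lesssim\|u-v\|_{L^2(\G)}
\)
and
\(
   |V\bcurlG(u-v)|_{H^s(\CQ_h)}\lesssim\|u-v\|_{\tilde H^s(\G)},
\)
with constants \emph{independent of $s$}. Substituting and taking square roots gives
\[
   \frac1{\sqrt\nu}\,\|T(u-v)\|_{L^2(\gamma_h)}
   \lesssim
   \frac1{\sqrt{\nu(s-1/2)}}\Bigl(h^{-1/2}\|u-v\|_{L^2(\G)}+h^{s-1/2}\|u-v\|_{\tilde H^s(\G)}\Bigr),
\]
and combining the two term bounds with the Strang inequality, exactly as at the end of Section~\ref{sec_proof}, produces the asserted estimate with the improved factor $(s-1/2)^{-1/2}$.

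The only technical care needed — and the step I would flag as the main subtlety rather than an obstacle — is to ensure that the continuity constants in \eqref{curl_cont_face_tilde} and \eqref{V_cont_open} are uniform for $s$ in a neighbourhood of $1/2$; this is the same uniform-equivalence issue between the Sobolev-Slobodeckij and interpolation norms invoked in the main proof (cf.~Remark~\ref{rem_norms}), applied now on the fixed domain $\G$ rather than on reference elements, and it holds for $s\le 1-\delta$. Everything else is identical to Section~\ref{sec_proof}; the genuinely new ingredient is simply that a single face makes the decomposition \eqref{DD2}, and with it the $(s-1/2)$-dependent constant of \eqref{equiv}, unnecessary.
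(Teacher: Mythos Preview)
Your proposal is correct and follows essentially the same route as the paper's proof: the crucial observation is that on a single face one can apply \eqref{curl_cont_face_tilde} and \eqref{V_cont_open} directly to replace the chain \eqref{pf_main_5} by $|V\bcurlG(u-v)|_{H^s(\CQ_h)}^2\lesssim\|\bcurlG(u-v)\|_{\tilde H^{s-1}(\G)}^2\lesssim\|u-v\|_{\tilde H^s(\G)}^2$, thereby bypassing \eqref{DD2} and the $(s-1/2)^{-1}$ constant of \eqref{equiv}. The paper's argument is the same, stated more tersely.
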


\begin{proof}
This estimate follows as discussed above and by replacing \eqref{pf_main_5}
with the estimate
\[
   |V\bcurlG (u-v)|_{H^s(\CQ_h)}^2
   \lesssim
   |V\bcurlG (u-v)|_{H^s(\G)}^2
   \lesssim
   \|\bcurlG (u-v)\|_{\tilde H^{s-1}(\G)}^2
   \lesssim
   \|u-v\|_{\tilde H^{s}(\G)}^2
\]
where we used the continuity \eqref{V_cont_open} of $V$ and the continuity
\eqref{curl_cont_face_tilde} of $\bcurlG$ on the single face $\G$.
Then, instead of \eqref{pf_main_6}, we obtain
\[
   \frac 1{\sqrt{\nu}} \|T(u-v)\|_{L^2(\gamma_h)}
   \lesssim
   \frac 1{\sqrt{\nu(s-1/2)}}
   \Bigl(
         h^{-1/2} \|u-v\|_{L^2(\G)}
         +
         h^{s-1/2} \|u-v\|_{\tilde H^s(\G)}
   \Bigr).
\]
and the result follows.
\end{proof}

\begin{remark}
The improved estimate from Theorem~\ref{thm_cea_plane} leads to the
same convergence order of the $h$-version as stated by Corollary~\ref{cor_h}
(though $r<1$ by the reduced regularity).
For the $p$-version, however, it improves the estimate by a factor
of $\log^{-1}(1+p)$, cf.~Corollary~\ref{cor_p} and Remark~\ref{rem_p}.
\end{remark}

%%%%%%%%%%%%%%%%%%%%%%%%%%%%%%%%%%%%%%%%%%%%%%%%%%%%%%%%%%%%%%%%%%%%%%%%%%%%%%%%
\section{Numerical results} \label{sec_num}

We consider the model problem (\ref{IE}) with $f=1$ on $\G=(0,1) \times (0,1)$
and use uniform meshes $\CQ_h$ on $\G$ consisting of squares.

Since the exact solution $u$ of \eqref{IE} is unknown, the error
\[
   |u-u_{hp}|_{H^{1/2}(\CQ_h)}^2
   +
   \nu
   \|[u_{hp}]\|_{L^2(\gamma_h)}^2
\]
cannot be computed directly, except for $\|[u_{hp}]\|_{L^2(\gamma_h)}$ which is straightforward
to implement.
As in previous papers on non-conforming approximations of hypersingular operators
(see, e.g., \cite{ChoulyH_NDD}) we approximate an upper bound to the semi-norm 
$|u-u_{hp}|_{H^{1/2}(\CQ_h)}$. First, note that there holds
\[
   |u-u_{hp}|_{H^{1/2}(\CQ_h)}^2 \lesssim
   \<V \bcurl_h (u - u_{hp}), \bcurl_h (u - u_{hp}) \>_\G,
\]
cf. \eqref{pf_ell_3}.
We find that
\begin{align*}
   |u-u_{hp}|_{H^{1/2}(\CQ_h)}^2
   &\lesssim
   \<V \bcurlG u, \bcurlG u \>_\G + \<V \bcurl_h u_{hp}, \bcurl_h u_{hp} \>_\G
   - 2 \<V \bcurlG u, \bcurl_h u_{hp} \>_\G.
\end{align*}
Let us analyze the three terms on the right-hand side.\\
(i) Since $u\in\tilde H^{1/2}(\G)$, there holds
\[
   \<V \bcurlG u, \bcurlG u \>_\G = \<W u, u\>_\G.
\]
The energy norm (squared) $\<Wu,u\>_\G$ of $u$ can be approximated through
extrapolation, in the following denoted by $\|u\|_{\mathrm ex}^2$,
see \cite{ErvinHS_93_hpB}.\\
(ii) By construction of $u_{hp}$ \eqref{DGBEM},
\[
   \<V \bcurl_h u_{hp}, \bcurl_h u_{hp} \>_\G = \<f,u_{hp}\>_\G - \nu \|[u_{hp}]\|_{L^2(\gamma_h)}^2.
\]
(iii) By consistency (see Proposition~\ref{prop_con}),
\[
   \<V \bcurlG u, \bcurl_h u_{hp} \>_\G = \<f,u_{hp}\>_\G - \<Tu,[u_{hp}]\>_{\gamma_h}.
\]
For our example, $u\in \tilde H^{1-\epsilon}(\G)$ for any $\epsilon>0$, so that
$Tu\in H^{1/2-\epsilon}(\gamma_h)$ for any $\epsilon>0$. By duality,
\[
   \<Tu,[u_{hp}]\>_{\gamma_h}
   \le
   \|Tu\|_{H^{1/2-\epsilon}(\gamma_h)} \|[u_{hp}]\|_{H^{-1/2+\epsilon}(\gamma_h)}.
\]
Combining (i)--(iii) we conclude that for any $\epsilon>0$ there holds (with a constant depending
on $\epsilon$)
\[
   |u-u_{hp}|_{H^{1/2}(\CQ_h)}^2
   \lesssim
   |\<W u, u\>_\G - \<f,u_{hp}\>_\G |
   + \nu \|[u_{hp}]\|_{L^2(\gamma_h)}^2 + \|[u_{hp}]\|_{H^{-1/2+\epsilon}(\gamma_h)}.
\]
Since $u\in H^{1/2-\epsilon}(\gamma_h)$ (and considering the singularities of $u$)
we expect that
\[
   \|[u_{hp}]\|_{H^{-1/2+\epsilon}(\gamma_h)} \lesssim h^{1-2\epsilon} p^{-2(1-2\epsilon)}
   \qquad\mbox{and}\qquad
   \|[u_{hp}]\|_{L^2(\gamma_h)}^2 \lesssim h^{1-2\epsilon} p^{-2(1-2\epsilon)}
\]
are of the same order (note the different exponents of the norms),
so that for our numerical experiments we take
\begin{equation*}
   \Bigl(\bigl|\|u\|_{\rm ex}^2 - \<f,u_{hp}\>_\G\bigr|^{1/2}
         + \, \|[u_{hp}]\|_{L^2(\gamma_h)}
   \Bigr)/\|u\|_{\rm ex}
\end{equation*}
as a computable and reasonable measure for an upper bound of the error
$\|u-u_{hp}\|_{H^{1/2}(\CQ_h)}$ normalized by
$\|u\|_{\tilde H^{1/2}(\G)}\approx \|u\|_{\rm ex}$.

Below we present numerical results for the two contributions
\[
   \bigl|\|u\|_{\rm ex}^2 - \<f,u_{hp}\>_\G\bigr|^{1/2}/\|u\|_{\rm ex}
\]
(referred to as ``$H^{1/2}$'' error in the figures) and
\[
   \|[u_{hp}]\|^{1/2}_{L^2(\gamma)}/\|u\|_{\rm ex}
\]
(referred to as ``$L^2$'' error) which measures the jumps, a measure for
non-conformity.

We first study the influence of $\nu$. In Figure~\ref{fig_sol} four different approximations
are presented, all on the mesh of $25$ elements and polynomial degree $p=3$.
The first three pictures (a)--(c) show $u_{hp}$ for increasing $\nu$ ($\nu=0.1, 1, 10$) and
the last picture (d) shows the conforming approximation. One sees that with increasing
$\nu$ discontinuities quickly disappear, and $u_{hp}$ with $\nu=10$ is visually not distinguishable
from the conforming counterpart. Note, in particular, how the trace of $u_{hp}$ on the
boundary of $\G$ approaches $0$ when $\nu$ increases. Nevertheless, even for $\nu=100$,
discontinuities are still present and converge at the predicted rate,
see Figure~\ref{fig_hp2_nu100} below.

Now let us study convergence rates. In our case of an open surface,
the strongest singularities in \eqref{reg} are edge singularities with $\mu=1/2$
so that the $h$-version for any polynomial degree should converge like $h^{1/2}$ and
the $p$-version like $p^{-1}$, see~\cite{BespalovH_05_pBE,BespalovH_08_hpB}, and
\cite{HeuerMS_99_ECB} for numerical results.

In Figures~\ref{fig_hp1_nu1}, \ref{fig_hp1_nu20} and \ref{fig_hp1_nu100} we present
on double-logarithmic scales the relative ``$H^{1/2}$'' errors with $\nu=1,20,100$,
respectively, for the polynomial degrees $p=1,3,5$ in all cases. We also show the
errors for the conforming method ($p=1$) and, for comparison, the curve $h^{1/2}$.
The latter curve indicates the expected rate of convergence (cf. Corollaries~\ref{cor_h} and
\ref{cor_hp}) and, indeed, the numerical results show at least this convergence rate.
Note, however, that in Figure~\ref{fig_hp1_nu1} the curve for $p=1$ is below the ones for
$p=3,5$, whereas for a conforming method increasing $p$ reduces the error. In this case
$\nu=1$ is small and we explain this phenomenon by the fact that we are dealing with
a saddle point formulation where on cannot expect convergence like that of a projection method
(as it is the case with conforming approximations of this operator).
Indeed, when $\nu$ increases, the expected order of the curves is re-established.
First, for $\nu=20$ in Figure~\ref{fig_hp1_nu20}, we have a pre-asymptotic super-convergence
and then, for $\nu=100$ in Figure~\ref{fig_hp1_nu100}, the curves for the different degrees
have the right order (highest degree gives smallest error).

In Figures~\ref{fig_hp2_nu1}, \ref{fig_hp2_nu20} and \ref{fig_hp2_nu100} the corresponding
``$L^2$'' errors of the jumps are given. They are all of the right convergence order
$h^{1/2}$ and in the right order.

In Figures~\ref{fig_p1} and \ref{fig_p2} we plot the errors (``$H^{1/2}$'' resp. ``$L^2$'')
obtained by the $p$-version and for different values of $\nu$ ($\nu=1,10,50,100$) along
with the curve $p^{-1}$ which represents the expected convergence rate, cf.~Corollary~\ref{cor_p}.
For the given range of degrees of freedom, smaller $\nu$ does not result in the
expected convergence order of the ``$H^{1/2}$'' errors
(at least not of our upper bound) and we conclude that we are
in a pre-asymptotic range. For higher $\nu$ ($\nu=50,100$) the optimal order is observed.
In contrast, the ``$L^2$'' convergence of the jumps is the optimal one for all $\nu$
and, not surprisingly, larger $\nu$ leads to smaller errors.

\begin{figure}[htb]
  \begin{center}
      \subfigure[$\nu=0.1$]{\includegraphics[width=0.45\textwidth]{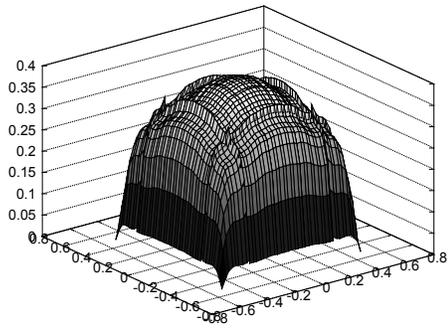}}
      \subfigure[$\nu=1$]{\includegraphics[width=0.45\textwidth]{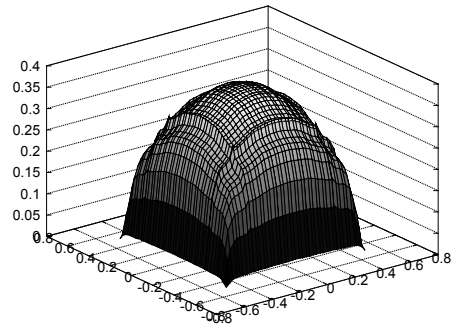}}

      \subfigure[$\nu=10$]{\includegraphics[width=0.45\textwidth]{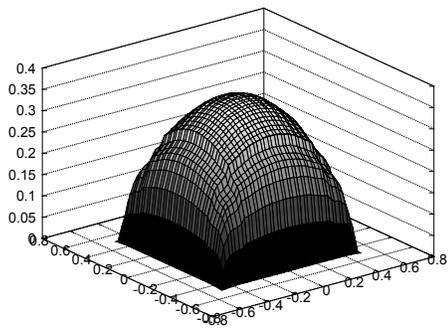}}
      \subfigure[conforming]{\includegraphics[width=0.45\textwidth]{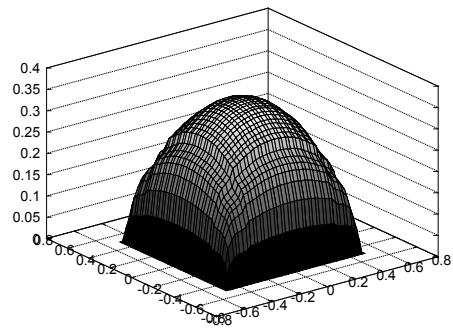}}
    \caption{Approximation with $5\times 5$ elements and $p=3$.}
    \label{fig_sol}
  \end{center}
\end{figure}

\begin{figure}[htb]
\centering
\includegraphics[width=0.75\textwidth]{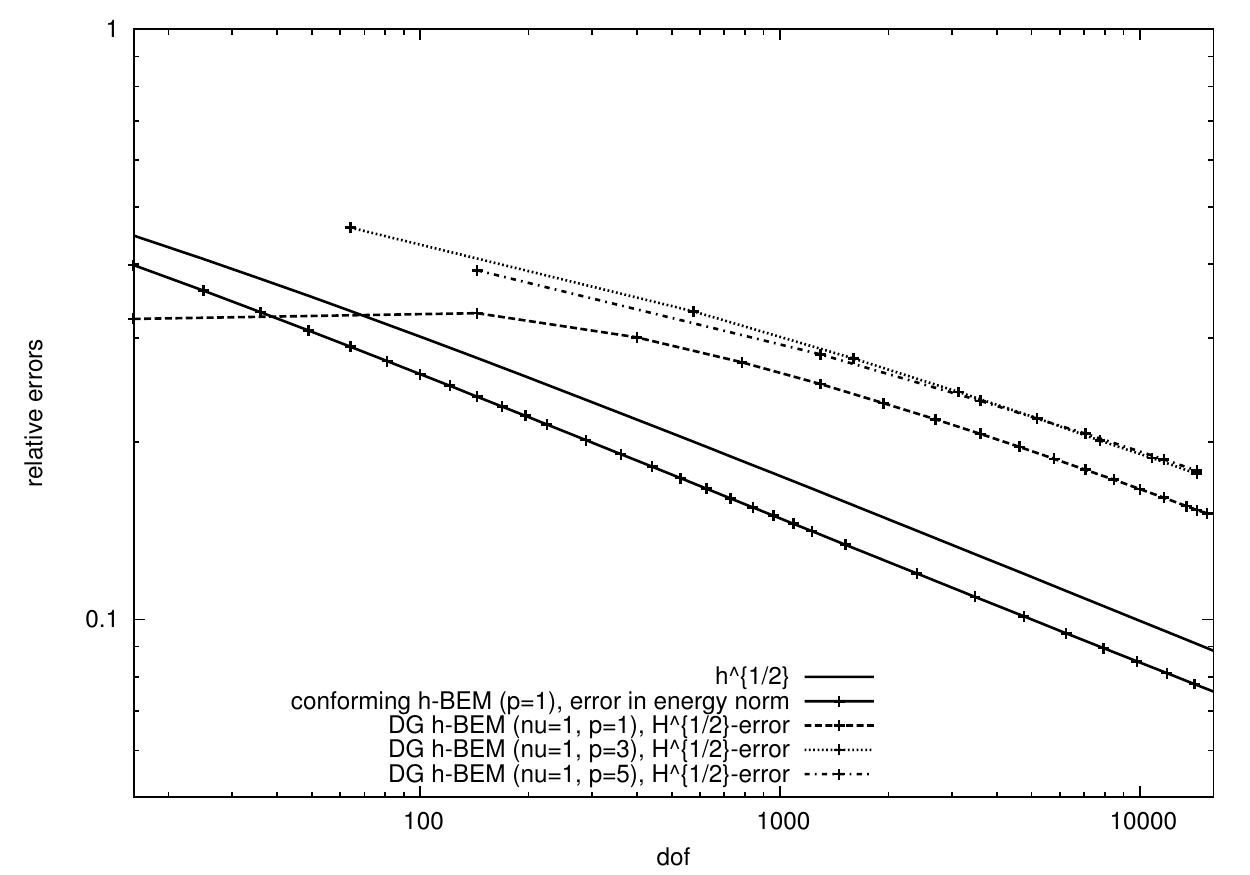} 
\caption{$h$-version with degrees $p=1,3,5$ and $\nu=1$,
         relative ``$H^{1/2}$'' errors.
         Comparison with conforming BEM.}
\label{fig_hp1_nu1}
\end{figure}

\begin{figure}[htb]
\centering
\includegraphics[width=0.75\textwidth]{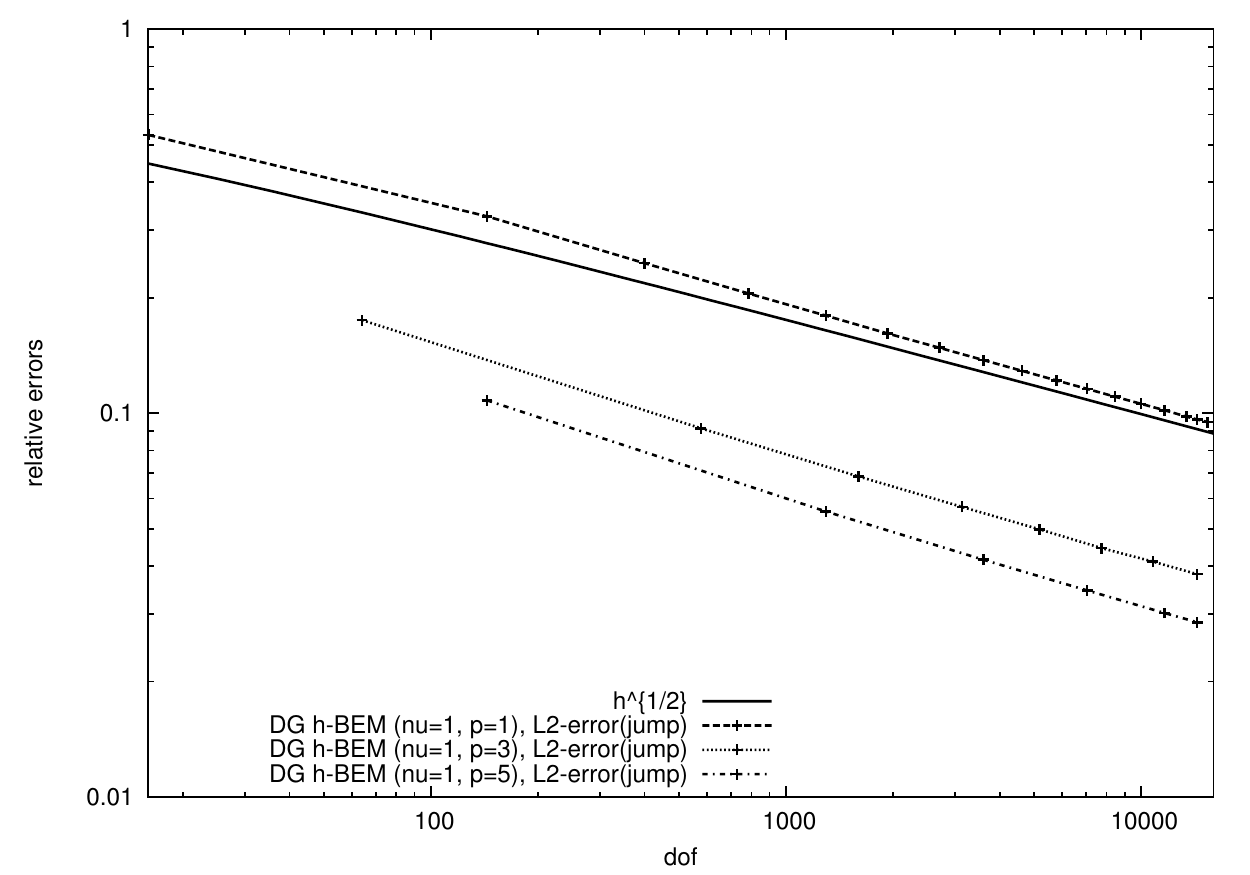} 
\caption{$h$-version with degrees $p=1,3,5$ and $\nu=1$,
         relative ``$L^2$'' errors.}
\label{fig_hp2_nu1}
\end{figure}

\begin{figure}[htb]
\centering
\includegraphics[width=0.75\textwidth]{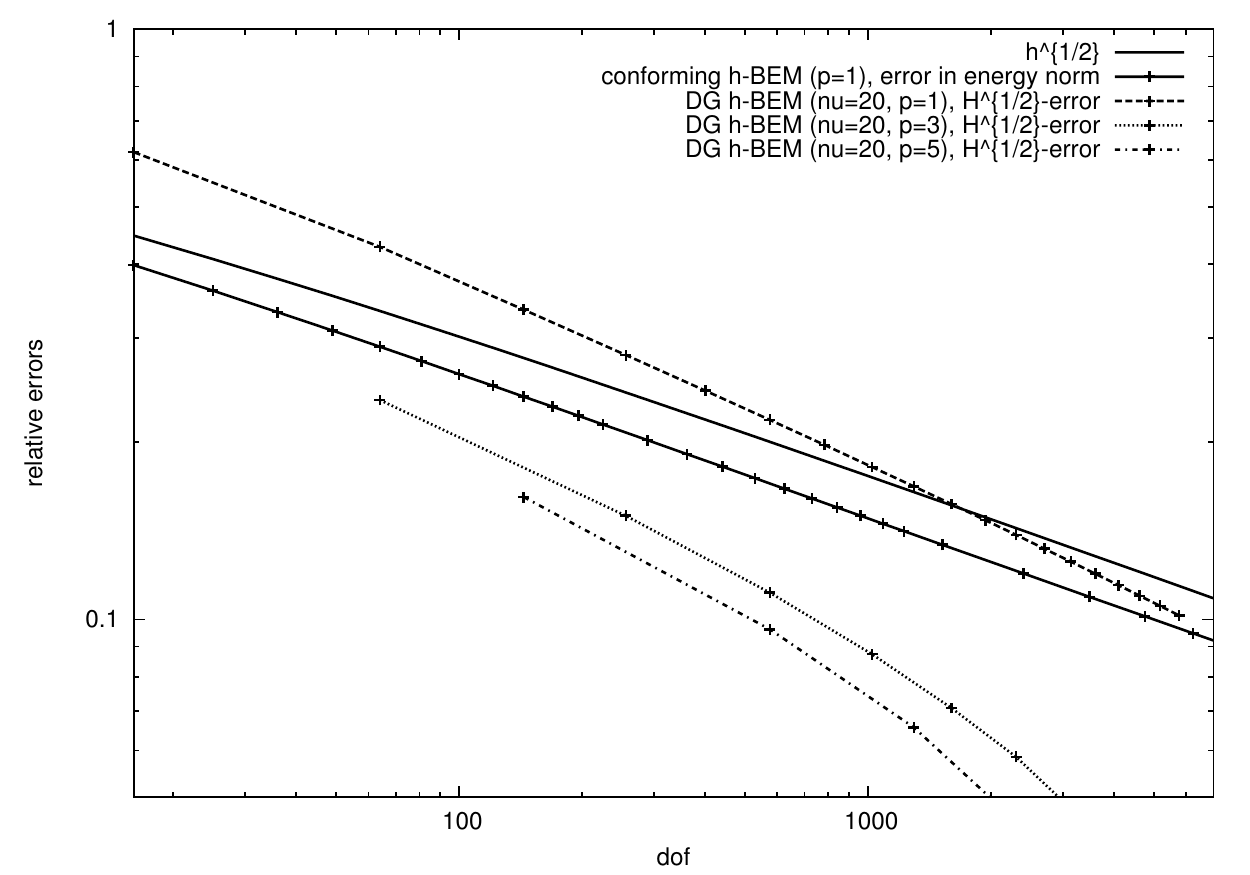} 
\caption{$h$-version with degrees $p=1,3,5$ and $\nu=20$,
         relative ``$H^{1/2}$'' errors.
         Comparison with conforming BEM.}
\label{fig_hp1_nu20}
\end{figure}

\begin{figure}[htb]
\centering
\includegraphics[width=0.75\textwidth]{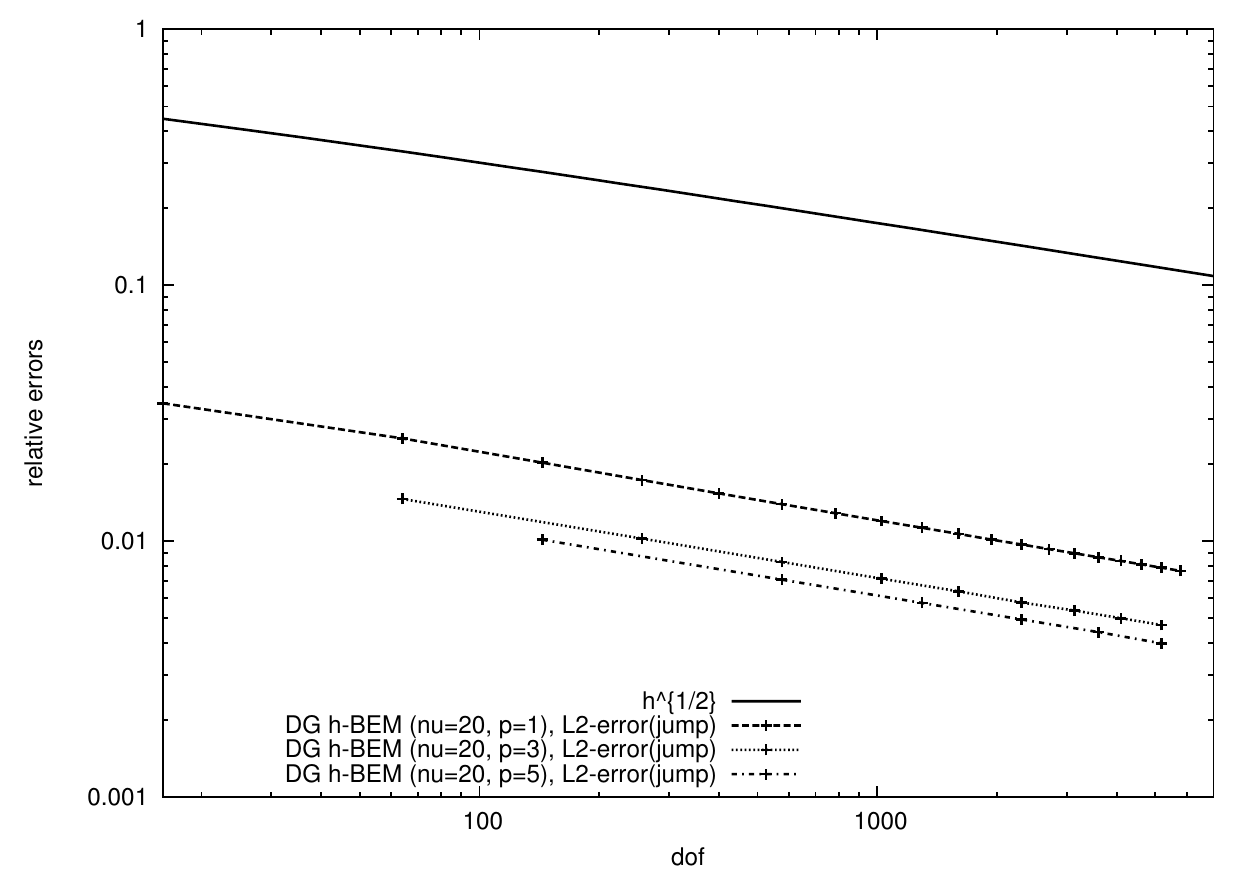} 
\caption{$h$-version with degrees $p=1,3,5$ and $\nu=20$,
         relative ``$L^2$'' errors.}
\label{fig_hp2_nu20}
\end{figure}

\begin{figure}[htb]
\centering
\includegraphics[width=0.75\textwidth]{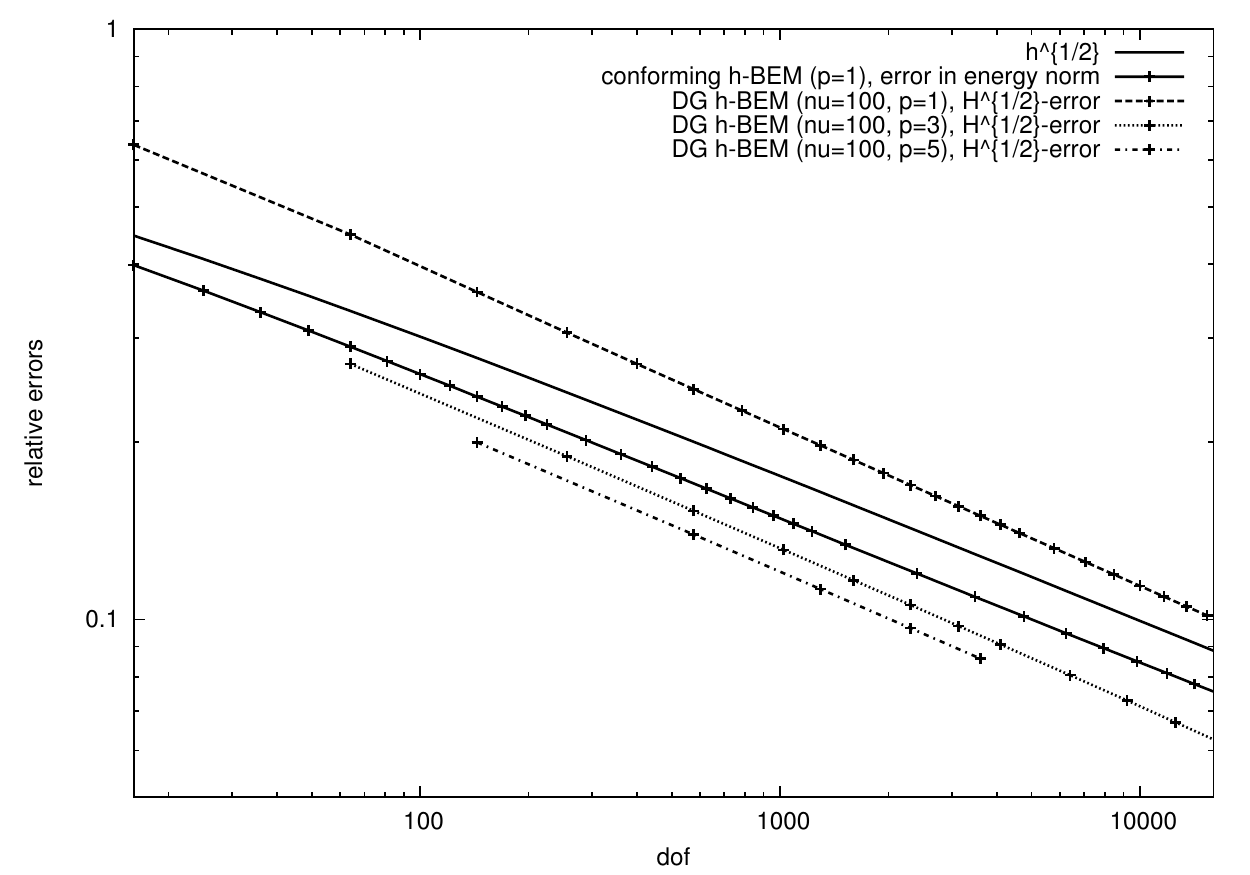} 
\caption{$h$-version with degrees $p=1,3,5$ and $\nu=100$,
         relative ``$H^{1/2}$'' errors.
         Comparison with conforming BEM.}
\label{fig_hp1_nu100}
\end{figure}

\begin{figure}[htb]
\centering
\includegraphics[width=0.75\textwidth]{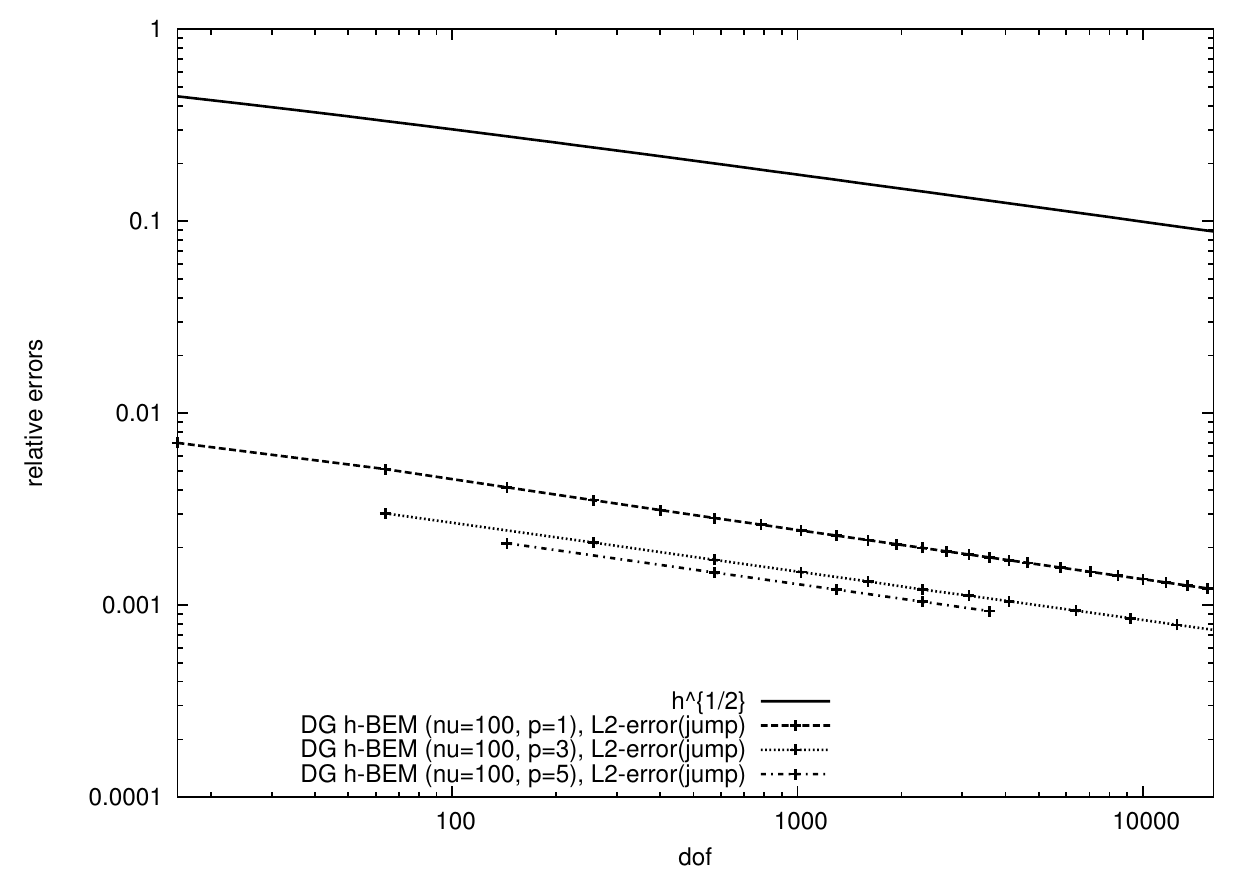} 
\caption{$h$-version with degrees $p=1,3,5$ and $\nu=100$,
         relative ``$L^2$'' errors.}
\label{fig_hp2_nu100}
\end{figure}

\begin{figure}[htb]
\centering
\includegraphics[width=0.75\textwidth]{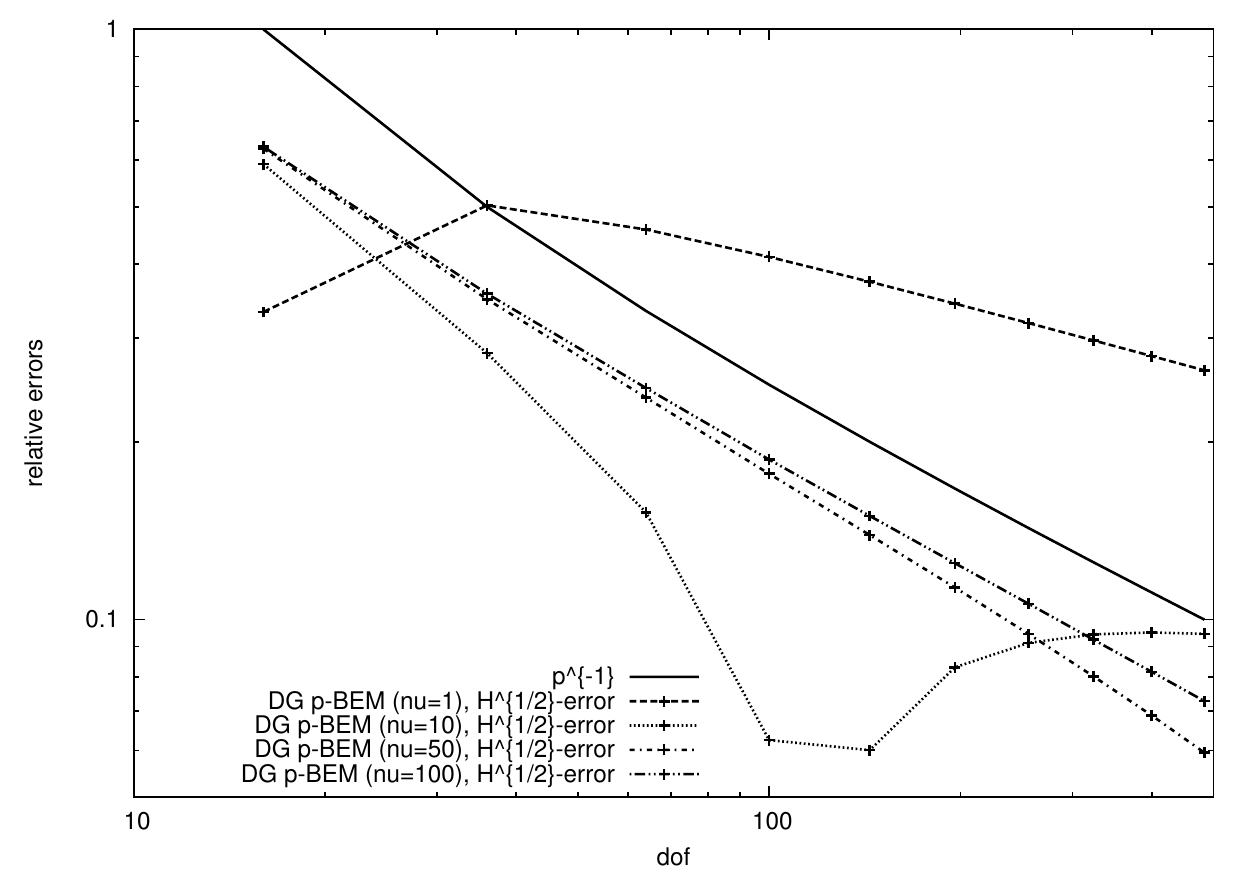} 
\caption{$p$-version with $4$ elements and $\nu=1,10,50,100$,
         relative ``$H^{1/2}$'' errors.}
\label{fig_p1}
\end{figure}

\begin{figure}[htb]
\centering
\includegraphics[width=0.75\textwidth]{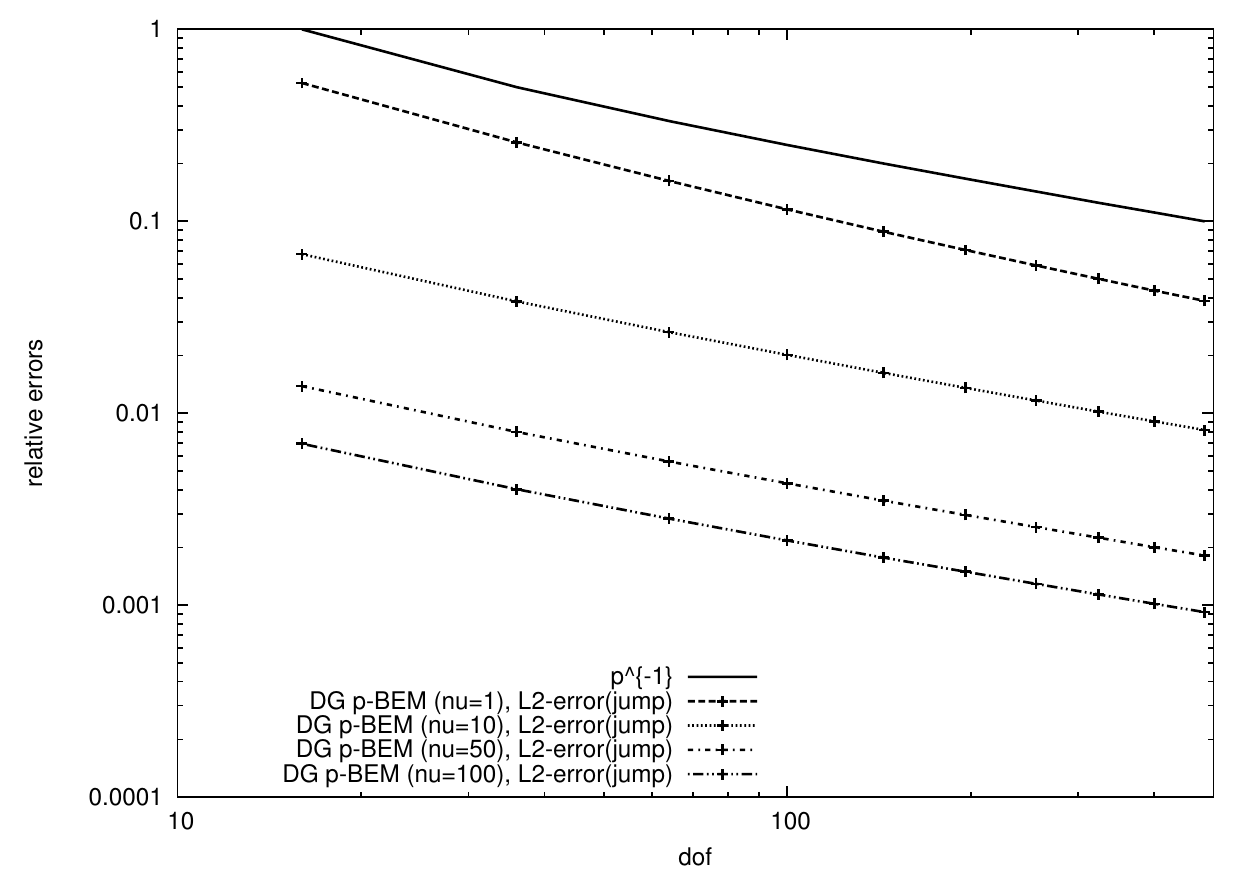} 
\caption{$p$-version with $4$ elements and $\nu=1,10,50,100$,
         relative ``$L^2$'' errors.}
\label{fig_p2}
\end{figure}

%%%%%%%%%%%%%%%%%%%%%%%%%%%%%%%%%%%%%%%%%%%%%%%%%%%%%%%%%%%%%%%%%%%%%%%%%%%%%%%%
\clearpage
\bibliographystyle{siam}
%\bibliography{biblio}
\bibliography{../../bib/bib,../../bib/heuer,../../bib/fem}

\begin{thebibliography}{10}

\bibitem{AinsworthMT_99_CBE}
{\sc M.~Ainsworth, W.~McLean, and T.~Tran}, {\em The conditioning of boundary
  element equations on locally refined meshes and preconditioning by diagonal
  scaling}, SIAM J. Numer. Anal., 36 (1999), pp.~1901--1932.

\bibitem{BespalovH_05_pBE}
{\sc A.~Bespalov and N.~Heuer}, {\em The $p$-version of the boundary element
  method for hypersingular operators on piecewise plane open surfaces}, Numer.
  Math., 100 (2005), pp.~185--209.

\bibitem{BespalovH_08_hpB}
\leavevmode\vrule height 2pt depth -1.6pt width 23pt, {\em The $hp$-version of
  the boundary element method with quasi-uniform meshes in three dimensions},
  ESAIM Math. Model. Numer. Anal., 42 (2008), pp.~821--849.

\bibitem{BuffaCS_02_THL}
{\sc A.~Buffa, M.~Costabel, and D.~Sheen}, {\em On traces for {H}(curl,
  {$\Omega$}) in {Lipschitz} domains}, J. Math. Anal. Appl., 276 (2002),
  pp.~845--867.

\bibitem{ChoulyH_NDD}
{\sc F.~Chouly and N.~Heuer}, {\em A {Nitsche}-based domain decomposition
  method for hypersingular integral equations}, {arXiv}: 1102.0202, 2011.
\newblock {\em Numer. Math.} (to appear).

\bibitem{Costabel_88_BIO}
{\sc M.~Costabel}, {\em Boundary integral operators on {Lipschitz} domains:
  Elementary results}, SIAM J. Math. Anal., 19 (1988), pp.~613--626.

\bibitem{Dauge_88_EBV}
{\sc M.~Dauge}, {\em Elliptic boundary value problems on corner domains},
  vol.~1341 of Lecture Notes in Mathematics, Springer-Verlag, Berlin,
  Heidelberg, New-York, 1988.

\bibitem{ErvinHS_93_hpB}
{\sc V.~J. Ervin, N.~Heuer, and E.~P. Stephan}, {\em On the $h$-$p$ version of
  the boundary element method for {Symm's} integral equation on polygons},
  Comput. Methods Appl. Mech. Engrg., 110 (1993), pp.~25--38.

\bibitem{GaticaHH_09_BLM}
{\sc G.~N. Gatica, M.~Healey, and N.~Heuer}, {\em The boundary element method
  with {Lagrangian} multipliers}, Numer. Methods Partial Differential Eq., 25
  (2009), pp.~1303--1319.

\bibitem{HealeyH_10_MBE}
{\sc M.~Healey and N.~Heuer}, {\em Mortar boundary elements}, SIAM J. Numer.
  Anal., 48 (2010), pp.~1395--1418.

\bibitem{Heuer_01_ApS}
{\sc N.~Heuer}, {\em Additive {Schwarz} method for the $p$-version of the
  boundary element method for the single layer potential operator on a plane
  screen}, Numer. Math., 88 (2001), pp.~485--511.

\bibitem{HeuerMS_99_ECB}
{\sc N.~Heuer, M.~Maischak, and E.~P. Stephan}, {\em Exponential convergence of
  the hp-version for the boundary element method on open surfaces}, Numer.
  Math., 83 (1999), pp.~641--666.

\bibitem{HeuerS_09_CRB}
{\sc N.~Heuer and F.-J. Sayas}, {\em {Crouzeix}--{Raviart} boundary elements},
  Numer. Math., 112 (2009), pp.~381--401.

\bibitem{LionsMagenes}
{\sc J.-L. Lions and E.~Magenes}, {\em Non-Homogeneous Boundary Value Problems
  and Applications I}, Springer-Verlag, New York, 1972.

\bibitem{Nedelec_82_IEN}
{\sc J.-C. N{\'e}d{\'e}lec}, {\em Integral equations with nonintegrable
  kernels}, Integral Equations Operator Theory, 5 (1982), pp.~562--572.

\bibitem{SchwabS_96_OpA}
{\sc C.~Schwab and M.~Suri}, {\em The optimal p-version approximation of
  singularities on polyhedra in the boundary element method}, SIAM J. Numer.
  Anal., 33 (1996), pp.~729--759.

\bibitem{vP}
{\sc T.~von Petersdorff}, {\em {Randwertprobleme der Elastizit\"atstheorie
  f\"ur Polyeder -- Singularit\"aten und Approximation mit
  Randelementmethoden}}, PhD thesis, Technische Hochschule Darmstadt, Germany,
  1989.

\bibitem{vonPetersdorffS_90_RMB}
{\sc T.~von Petersdorff and E.~P. Stephan}, {\em Regularity of mixed boundary
  value problems in {$\R^3$} and boundary element methods on graded meshes},
  Math. Methods Appl. Sci., 12 (1990), pp.~229--249.

\end{thebibliography}

\end{document}